\def\url@leostyle{%
 \@ifundefined{selectfont}{\def\UrlFont{\sf}}{\def\UrlFont{\scriptsize\ttfamily}}} \makeatother\urlstyle{leo}
\newtheorem{theorem}{Theorem}
\newtheorem{proposition}[theorem]{Proposition}
\newtheorem{corollary}[theorem]{Corollary}
\theoremstyle{definition}
\newtheorem{definition}[theorem]{Definition}
\newtheorem{example}[theorem]{Example}
\theoremstyle{remark}
\newtheorem{remark}[theorem]{Remark}
\numberwithin{equation}{section}
\numberwithin{theorem}{section}
\def\cB{\mathcal{B}}
\def\cF{\mathcal{F}}
\def\cN{\mathcal{N}}
\def\cT{\mathcal{T}}
\def\cX{\mathcal{X}}
\def\bE{\mathbb{E}}
\def\bP{\mathbb{P}}
\def\bR{\mathbb{R}}
\def\sF{\mathscr{F}}
\def\sH{\mathscr{H}}
\def\sU{\mathscr{U}}
\def\mH{\mathsf{H}}
\newcommand{\1}{\mathbbm{1}}            
\newcommand{\set}[1]{\{#1\}}            
\renewcommand{\mid}{\;|\;}              
\title{ \vspace{-1.5em} 
       Recursive Construction of Confidence Regions
}
\def\and{%
  \end{tabular}%
  \begin{tabular}[t]{c}}%
\def\@fnsymbol#1{\ensuremath{\ifcase#1\or a\or b\or c\or
   d\or e\or f\or g\or h\or i\else\@ctrerr\fi}}
\author{
        Tomasz R. Bielecki\,\thanks{Department of Applied Mathematics, Illinois Institute of Technology
       \newline \hspace*{1.45em}  10 W 32nd Str, Building RE, Room 208, Chicago, IL 60616, USA
       \newline \hspace*{1.45em}  Emails: \url{tbielecki@iit.edu} (Bielecki), \url{tchen29@iit.edu} (Chen) and \url{cialenco@iit.edu} (Cialenco)
       \newline \hspace*{1.45em}  URLs: \url{http://math.iit.edu/\~bielecki} (Bielecki) and \url{http://math.iit.edu/\~igor} (Cialenco)
        \vspace{0.5em}} ,
\and
        Tao Chen\,\footnotemark[1] ,
\and
        Igor Cialenco\,\footnotemark[1]
        }
\date{ {\small  
First Circulated: May 18, 2016\\
This Version: May 15, 2017
}} %
\begin{document}

\maketitle


{\footnotesize
\begin{tabular}{l@{} p{350pt}}
  \hline \\[-.2em]
  \textsc{Abstract}: \ &
Assuming that one-step transition kernel of a discrete time, time-homogenous Markov chain model is parameterized by a  parameter $\theta\in \boldsymbol \Theta$, we derive a recursive (in time)  construction of confidence regions for the unknown parameter of interest, say $\theta^*\in \boldsymbol \Theta$. It is supposed that the observed data used in construction of the confidence regions is generated by a Markov chain whose transition kernel corresponds to $\theta^*$ .  The key step in our construction is derivation of a recursive scheme for an appropriate point estimator of $\theta^*$. To achieve this, we start by what we call the base recursive point estimator, using which we design a quasi-asymptotically linear recursive point estimator (a concept introduced in this paper). For the latter estimator we prove its weak consistency and asymptotic normality. The recursive construction of confidence regions is needed not only for the purpose of speeding up the computation of the successive confidence regions, but, primarily, for the ability to apply the dynamic programming principle in the context of robust adaptive stochastic control methodology.
\\[0.5em]
\textsc{Keywords:} \ &  recursive confidence regions; stochastic approximation; recursive point estimators; statistical inference for Markov chains; ergodic processes; quasi-asymptotically linear estimator. \\
\textsc{MSC2010:} \ & 62M05, 62F10, 62F12, 62F25, 60J05, 60J20. \\[1em]
  \hline
\end{tabular}
}

\section{Introduction}
Suppose that a set of dynamic probabilistic models is selected and that it is parameterized in terms of a finite dimensional parameter $\theta$ taking values in the known parameter space $\boldsymbol \Theta$. We postulate that all these models are possible descriptions of some reality, which is of interest to us, and that only one of the models, say the  one corresponding to $\theta^*\in\boldsymbol \Theta$, is the adequate, or true, description of this reality.

Motivated by discrete time robust stochastic control problems subject to model uncertainty (cf. \cite{BCCCJ2016}), we consider in the present paper discrete time, time-homogeneous Markov chain models only. Accordingly, we assume that the one-step transition kernel of the Markov chain model is parameterized by $\theta$. We postulate that the true parameter $\theta^*$ is not known, and the main goal is to derive a recursive (in time) construction of confidence regions for $\theta^*$. Needless to say, we are seeking a recursive construction of confidence regions for $\theta^*$ that satisfy desired properties; in particular, some asymptotic properties, as the time series of observations increases. Robust stochastic control problems provide primary motivation for the present work, but, clearly, potential applications of the results presented here are far reaching.

The recursive construction of confidence regions is needed not only for the purpose of speeding up the computation of the successive confidence regions, but, primarily, for the ability to apply the dynamic programming principle in the context of robust stochastic control methodology introduced in \cite{BCCCJ2016}.

There is a vast literature devoted to recursive computation, also known as on-line computation, of point estimators. It is fair to say though that, to the best of our knowledge, the literature regarding recursive  construction of confidence regions and their asymptotic analysis is very scarce. In fact, we were able to identify only two previous works,  \cite{Yin1989} and \cite{Yin1990}, touching upon this subject. In this regard, our work is the first to fully concentrate on the recursive construction of confidence regions and their asymptotic analysis. The geometric idea that underlies our recursive construction is motivated by recursive representation of confidence intervals for the mean of one dimensional Gaussian distribution with known variance, and by recursive representation of confidence ellipsoids  for the mean and variance of one dimensional Gaussian distribution, where in both cases observations are generated by i.i.d. random variables. The recursive representation is straightforward in the former case,  but it is not so any more in the latter one.

In the already mentioned works, \cite{Yin1989} and \cite{Yin1990}, a closely related idea was used for constructions of sequentially determined confidence ellipsoids based on stopping Brownian motions.
Our paper and Yin's work compare as follows:

  We take ergodic Markov chains as our underlying processes, whereas Yin considered several other different processes such as moving average processes and stationary $\phi$-mixing processes.  While providing a formula for the confidence ellipsoids that is essentially a recursive formula, Yin was interested in computing the volume of the ellipsoids.  By defining a stopping time as the first time that the volume of a ellipsoid is smaller than some threshold, in \cite{Yin1989} and \cite{Yin1990} the author proved a series of properties for such stopping time, and developed a stopping rule for recursive on-line algorithms.  We, on the other hand, focus on a constructive recursive derivation of the extreme points of our confidence regions.  Specifically, we provide a recursive formula for the extreme points so that we are able to efficiently compute these points and therefore to efficiently represent the points that lie in the confidence regions. This is an important new development, as it allows us to apply dynamic programming principle to the robust stochastic control problem that is studied in \cite{BCCCJ2016}.   From the numerical point of view, formulae for extreme points of the ellipsoids lead to efficient solution to the optimization problems that we encounter. We prove the weak consistency for the recursive confidence regions, for which having the representations of the extreme points plays an important role in the proof.

As it will be seen, one of the the key ingredients in our recursive construction of confidence regions is an appropriate  recursive scheme for deriving a point estimator of $\theta^*$. In this regard, building upon classical results from inferential statistics and from the area of stochastic approximation (cf. \cite{KushnerYin2003}, \cite{LeCam1956}, \cite{LeCam1960}), in Section 3:
\vspace{-0.3em}
\begin{itemize}
\item
We introduce the concept of \textit{quasi-asymptotic linearity of a point estimator} of $\theta^*$, which is satisfied by the recursive point estimation scheme that we develop in Section~\ref{section:ale}.
This concept is related to the classic definition of asymptotic linearity of a point estimator, but it overcomes one serious drawback that the classic concept suffers from: asymptotic linearity fails to be reconciled with the full recursiveness in some applications.
  \item Starting from what we call the base recursive point estimation scheme, we design a  quasi-asymptotically linear recursive point estimation scheme, and we prove the weak consistency and asymptotic normality of the point estimator  generated by this scheme.
\end{itemize}
\vspace{-0.2em}
The main original contribution of this paper is provided in Section 4 and it can be summarized as follows:
\begin{itemize}
  \item[] We provide a recursive construction of confidence regions for $\theta^*$. We prove that these confidence regions are weakly consistent, that is, they  converge in probability (in the Hausdorff metric) to the true parameter $\theta^*$.
\end{itemize}

The paper is organized as follows. In Section \ref{sec:pre}  we introduce the Markov chain framework relevant for the present study.

In Section \ref{sec:recpoint} we provide two recursive schemes for derivation of point estimators.
Section \ref{sec:sqrt} is devoted to the recursive construction of what we call the \textit{base (recursive) point estimator} of $\theta^*$. In our set-up, point-estimating of $\theta^*$ translates to finding solution to equation \eqref{eq:maineq}. This is an unknown equation. One of the most widely used iterative root finding procedures for unknown equations is the celebrated stochastic approximation method. Our base (recursive) point estimation scheme for $\theta^*$ is an adaptation of the  stochastic approximation method. Also, here we prove the strong consistency of the base point estimator. The key step to the desired recursive construction of confidence regions for $\theta^*$ is to establish the asymptotic normality of the underlying recursive point estimator.
In this regard one could impose additional assumptions, on top of the conditions needed for proving consistency of the point estimator, to obtain asymptotic normality for stochastic approximation estimators (see e.g. \cite{Sacks1958}, \cite{Fabian1968}, \cite{LaiRobbins1979}). This however would typically result in imposing a long list of assumptions that would not be easily verifiable.  Thus, we choose to go into a different direction by modifying the base estimator $\tilde \theta$ to the effect of producing a recursive estimator that is asymptotically normal. Therefore, in Section \ref{section:ale} we appropriately  modify our base (recursive) point estimator, so to construct a quasi-asymptotically linear (recursive) point estimator, for which we prove weak consistency and asymptotic normality.

The main section of this paper is Section~\ref{sec:conc}, which contains the main original contribution of the paper. This section is devoted to recursive construction of confidence regions for $\theta^*$, and to studying their asymptotic properties. In particular, we show that confidence regions derived from quasi-asymptotically linear (recursive) point estimators preserve a desired geometric structure. Such structure guarantees that we can represent the confidence regions in a recursive way in the sense that the region produced at step $n$ is fully determined by the region produced at step $n-1$ and by the newly arriving observation of the underlying reality.

Illustrating examples are provided in Section \ref{sec:ex}. The paper is completed with technical Appendices that also contains some of the proofs.

\section{Preliminaries}\label{sec:pre}

Let  $(\Omega, \sF)$ be a measurable space. The non-empty compact hyperrectangle $\boldsymbol \Theta\subset
\bR^d$  will play the role of the parameter
space throughout.\footnote{In general, the parameter space
may be infinite dimensional, consisting for example of dynamic
factors, such as deterministic functions of time or hidden Markov
chains. In this study, for simplicity, we chose the parameter
space to be a subset of $\bR^d$.}
We define $C(\theta)$, $\theta\in\partial\boldsymbol\Theta$, to be the infinite convex cone generated by the outer normals at $\theta$ of the faces on which $\theta$ lies; and $C(\theta)=\{0\}$ if $\theta$ belongs to the interior of $\boldsymbol\Theta$.
On the space $(\Omega, \sF)$ we consider a discrete time, real valued random process $Z=\{
Z_n,\ n\geq0\}$.\footnote{The study presented in this paper extends to the case when process $Z$ takes values in ${\mathbb{R}}^m$, for $m>1.$ We focus here the case of $m=1$ for simplicity of presentation.} We
postulate that this process is observed, and we denote by ${\mathbb {F}}=(\sF_n,n\geq0)$ its natural
filtration.
The (true) law of $Z$  is unknown, and assumed to belong
to a parameterized family of probability distributions on
$(\Omega, \sF)$, say $\{\bP_\theta,  \theta\in \boldsymbol \Theta\}$.  It will be convenient to consider $(\Omega, \sF)$ to be the canonical space for $Z$, and to consider $Z$ to be the canonical process (see Appendix A for details). Consequently, the law of $Z$ under $\bP_\theta$ is the same as $\bP_\theta$. The (true) law of $Z$ will be denoted by $\bP_{\theta^*}$; accordingly, $\theta ^*\in\boldsymbol \Theta$ is the (unknown) true parameter.
We assume that $\theta^*$ lies in the interior of $\boldsymbol \Theta$.

The set of probabilistic models that we are concerned with is $\set{(\Omega, \sF,{\mathbb {F}},Z,\bP_\theta),\theta\in \boldsymbol \Theta}.$ The model uncertainty addressed in this
paper occurs if  $\boldsymbol \Theta \ne \{\theta^*\},$ which we
assume to be the case. Our objective is to provide a recursive construction of confidence regions for $\theta^*$, based on accurate observations of realizations of process $Z$ through time, and satisfying desirable asymptotic properties.

In what follows, all equalities and inequalities between random variables will be understood in $\bP_{\theta^*}$ almost surely sense. We denote by $\bE_{\theta^*}$ the expectation operator corresponding to probability $\bP_{\theta^*}$.

We impose the following structural standing assumption.

\noindent \textbf{Assumption M:}\\ (i) Process $Z$ is a time homogenous Markov process under any $\bP_\theta, \ \theta \in \boldsymbol \Theta$.\\
(ii) Process $Z$ is an ergodic Markov process under $\bP_{\theta^*}$.\footnote{See Appendix \ref{sec:A1} for the definition of ergodicity that we postulate here.} \\
(iii) The transition kernel of process $Z$ under any $\bP_\theta, \ \theta \in \boldsymbol \Theta$ is absolutely continuous with respect to the Lebesgue measure on $\bR$, that is, for any Borel subset $A$ of $\bR$
\[
\bP_\theta (Z_1\in A \mid Z_0=x)=\int _A p_\theta(x,y) dy,
\]
for some positive and measurable function $p_\theta.$\footnote{This postulate is made solely in order to streamline the presentation. In general, our methodology works for Markov processes for which the transition kernel is not absolutely continuous with respect to the Lebesgue measure.}

For any $\theta \in \boldsymbol \Theta$ and $n\geq 1$, we define $\pi_n(\theta):=\log p_\theta(Z_{n-1},Z_n)$.

\begin{remark}\label{rem:law-of-pi}
 Since the process $Z$ is ergodic then is a stationary (see Appendix \ref{sec:A1}) process under $\bP_{\theta^*}$. Consequently, under $\bP_{\theta^*}$, for each $\theta \in \boldsymbol \Theta$ and for each $n\geq 0,$ the law of $\pi_n(\theta)$ is the same as the law of $\pi_1(\theta)$.
\end{remark}

We will need to impose several technical assumptions in what follows. We begin with the assumption

\medskip\noindent
R0. For any $\theta\in\boldsymbol \Theta$, $\pi_1(\theta)$ is integrable under $\bP_{\theta^*}$.

\medskip\noindent Then, assuming that M and R0 hold and using Proposition~\ref{th:marbirk} as well as the Kullback-Leibler Lemma (cf. \cite{KullbackLeibler1951}), we see that the following properties are satisfied:

    \noindent For any $\theta\in\boldsymbol \Theta$,
    \begin{equation}\label{eq:pi-ergo}
    \lim_{n\rightarrow\infty}\frac{1}{n}\sum_{i=1}^{n}\pi_i(\theta)=\bE_{\theta^*}[\pi_1(\theta)].
    \end{equation}

    \noindent Moreover, for any $\theta\in\boldsymbol \Theta$,
    \begin{equation}\label{eq:pi-ineq}
    \bE_{\theta^*}[\pi_1(\theta^*)]\geq \bE_{\theta^*}[\pi_1(\theta)].
    \end{equation}

In the statement of the technical assumptions R1-R8 below we use the notations
\begin{align}\label{notations}
  \psi_n(\theta)   &=\nabla\pi_n(\theta),\quad
  \Psi_n(\theta)   =\mH\pi_n(\theta),\quad   b_n(\theta)     =\bE_{\theta^*}[\psi_n(\theta)|\sF_{n-1}],
   \end{align}
where $\nabla$ denotes  the gradient vector and $\mH$ denotes the Hessian matrix with respect to $\theta$, respectively. Due to the fact that $Z$ is a Markov process, we have
\begin{eqnarray}\label{eq:z-markov}
b_n(\theta)=\bE_{\theta^*}[\psi_n(\theta)|\sF_{n-1}]=\bE_{\theta^*}[\psi_n(\theta)|Z_{n-1}],
\end{eqnarray}
which implies that for each $n\geq1$, $b_n$ is indeed a functional of $\theta$ and $Z_{n-1}$, and we postulate that $b_n$ is continuous with respect to $Z_{n-1}$.

\noindent
\begin{enumerate}[\textrm{R}1.]
\item
For each $x,y\in\bR$ the function $p_\cdot(x,y)\, : \boldsymbol \Theta \rightarrow \bR_+$ is three times differentiable, and
\begin{equation}\label{eq:HP}
  \nabla\int_\bR p_\theta(x,y)dy=\int_{\bR}\nabla p_{\theta}(x,y)dy, \quad \mH\int_\bR p_\theta(x,y)dy=\int_{\bR}\mH p_{\theta}(x,y)dy.
\end{equation}
\item
For any $\theta\in\mathbf{\Theta}$, $\psi_1(\theta)$ and $\Psi_1(\theta)$ are integrable under $\bP_{\theta^*}$.
The function $\bE_{\theta^*}[\pi_1(\, \cdot\,)]$ is twice differentiable in $\theta$, and
\begin{align*}
\nabla\bE_{\theta^*}[\pi_1(\theta)]=\bE_{\theta^*}[\psi_1(\theta)],\quad
\mH\bE_{\theta^*}[\pi_1(\theta)]=\bE_{\theta^*}[\Psi_1(\theta)].
\end{align*}
\item There is no stationary point\footnote{A stationary point of an ODE $\frac{dx}{dt}=f(x(t))$ is a point $x$ such that $f(x)=0$. For detailed discussion about projected ODE and stationary points, please refer to \cite{KushnerYin2003}.} on $\partial\boldsymbol\Theta$ for the differential equation
\begin{equation}\label{eq:proj-ode}
\frac{dx(t)}{dt}=\bE_{\theta^*}[\psi_1(x(t))]+\zeta(t),
\end{equation}
where $\zeta(t)\in-C(x(t))$ is the minimum force needed to keep $x(\cdot)$ in $\boldsymbol\Theta$. There exists a unique $\theta^\star\in\mathbf{\Theta}$ such that
$$
\bE_{\theta^*}[\psi_1(\theta^\star)]=0.
$$
  \item
  There exist some  positive constants $K_i, i=1,2,3$, such that for any $\theta,\theta_1,\theta_2\in\mathbf{\Theta}$, and $n\geq1$,\footnote{Superscript $T$ will  denote the transpose.}
  \begin{align}
    (\theta-\theta^*)^Tb_n(\theta)&\leq  - K_1\|\theta-\theta^*\|^2,\label{eq:K1} \\
    \|b_n(\theta_1)-b_n(\theta_2)\|&\leq K_2\|\theta_1-\theta_2\|, \label{eq:K2} \\
       \bE_{\theta^*}[\|\Psi_n(\theta_1)-\Psi_n(\theta_2)\|\mid \sF_{n-1}]&\leq K_3\|\theta_1-\theta_2\|.\label{eq:K3}
  \end{align}
  \item
  There exists a positive constant $K_4$, such that for any $\theta\in\mathbf{\Theta}$, and $n\geq1$,
  \begin{align}\label{eq:K4}
    \bE_{\theta^*}[\|\mH\psi_n(\theta)\||\sF_{n-1}]\leq K_4.
  \end{align}
  \item
  For any $n\geq1$,
  \begin{align}\label{eq:boundedE}
    \sup_{\theta \in \mathbf{\Theta}} \bE_{\theta^*}\|\psi_n(\theta)-b_n(\theta)\|^2<\infty.
  \end{align}

  \item
  For each $\theta\in\mathbf{\Theta}$ the Fisher information matrix \[I(\theta):={\bE_{\theta}[\psi_1(\theta)\psi_1^T(\theta)]}\] exists and is positive definite. Moreover, $I(\theta)$ is continuous with respect to $\theta$.
  \item
  \begin{align}
  \lim_{n\to\infty}\bE_{\theta^*}\left[\sup_{0\leq i\leq n}\left|\frac{1}{\sqrt{n}}\psi_i(\theta^*)\right|\right]=0.\label{eq:clt}
  \end{align}
\end{enumerate}

\begin{remark} (i) Note that in view of the Remark \ref{rem:law-of-pi} properties assumed in R2, R3, and R8 imply that analogous properties hold with time $n$ in place of time $1$.\\ (ii)
  According to Proposition~\ref{prop:mtheta}, we have that if R4-R6 hold, then \eqref{eq:K1}-\eqref{eq:K4} are also satisfied for any $\sF_{n-1}$-measurable random vector $\boldsymbol{\theta} \in \mathbf{\Theta}$.\\
  (iii)  A detailed analysis of the  ODE~\eqref{eq:proj-ode} is not given here due to space limitation. However, it proceeds in analogy to what is done in \cite[Section 4.3]{KushnerYin2003} . One concludes that this equation admits a unique solution for $x(0)\in\boldsymbol\Theta$.

\end{remark}

As stated above, our aim is to provide a recursive construction of the confidence regions for $\theta^*$.
  In the sequel, we will propose a method for achieving this goal that will be derived from a suitable recursive point estimator of $\theta^*$.
  Note that due to \eqref{eq:pi-ineq} and Assumption~R3, we have that $\theta^*$ is the unique solution of
  \begin{align}\label{eq:maineq}
  \bE_{\theta^*}[\psi_1(\theta)]=0.
  \end{align}
  Therefore, point-estimating $\theta^*$ is equivalent to point-estimating the solution of the equation \eqref{eq:maineq}. Since $\theta^*$ is unknown, the left-hand-side of the equation \eqref{eq:maineq} is not really known to us. We will therefore apply an appropriate version of the so called \textit{stochastic approximation} method, which is a recursive method used to point-estimate zeros of functions that can not be directly observed. This can be done in our set-up since, thanks to \eqref{eq:pi-ergo}, we are provided with a sequence of observed random variables  $\frac{1}{n}\sum_{i=1}^{n}\psi_i(\theta)$ that $\bP_{\theta^*}$ almost surely converges to $\bE_{\theta^*}[\psi_1(\theta)]$ -- a  property, which will enable us to adopt the method of stochastic approximation.
   Accordingly, in the next two sections, we will introduce two recursive point estimators of $\theta^*$, and we will derive  properties of these estimators that are relevant for us.

\section{Recursive point estimators }\label{sec:recpoint}
In this section two types of recursive point estimators are derived that are needed for construction of the recursive confidence regions from Section~\ref{sec:conc}.

\subsection{$\sqrt{n}$-consistent base point estimator}\label{sec:sqrt}

   In this section we consider a recursive point estimator $\tilde \theta=\set{\tilde{\theta}_n,\, n\geq 1}$ of $\theta^*$, that will be defined in \eqref{eq:tre}. Towards this end, we fix a  positive constant $\beta$  such that $\beta K_1>\frac{1}{2}$, where $K_1$ was introduced in Assumption~R6.
   Then, we follow the idea in \cite{KushnerYin2003} and define  the  process $\tilde{\theta}$ recursively as follows,
     \begin{align}
  \tilde{\theta}_n=\tilde{\theta}_{n-1}+\frac{\beta}{n}\psi_n(\tilde{\theta}_{n-1})+\frac{\beta}{n}J_n,\quad n\geq1, \label{eq:tre}
  \end{align}
  with the initial guess $ \tilde \theta _0$ being an element in $\mathbf{\Theta}$, where $\psi_n$ was defined in \eqref{notations}.
  The projection term $J_n$ is chosen so that $\frac{\beta}{n}J_n$ is the vector of shortest Euclidean length needed to take $\tilde{\theta}_{n-1}+\frac{\beta}{n}\psi_n(\tilde{\theta}_{n-1})$ back to the set $\boldsymbol\Theta$.
  It is not hard to see that $J_n\in-C(\tilde{\theta}_n)$.

   Given the definition of $\psi_n$, we see that $\tilde{\theta}_n$ is updated from $\tilde{\theta}_{n-1}$ based  on new observation $Z_n$ available at time $n$; of course, $Z_{n-1}$ is used as well. We note that the recursion \eqref{eq:tre} is a version of  the constrained stochastic approximation method,  which  is meant to recursively approximate roots of the unknown equations, such as equation \eqref{eq:maineq} (see e.g. \cite{RobbinsMonro1951}, \cite{KieferWolfowitz1952}, \cite{LjungSoederstroem1987}, \cite{KushnerClark1978}, \cite{KushnerYin2003}).

   \begin{remark}
   In applications of stochastic approximation, there are two ways to deal with the case of iterates becoming too large. One is to impose some stability conditions on the problem, and the other is to make appropriate adjustments to the basic algorithm.
   The latter is usually called constrained or truncated stochastic approximation (see e.g. \cite{KushnerClark1978}, \cite{KushnerBuche2002}, \cite{ShariaZhong2016}). In this work, we use the second method so that assumptions R1-R8 only need to be satisfied for $\theta$ that belongs to a compact subset of $\bR^d$ instead of the whole space.
   \end{remark}

   \begin{remark}
     In practice, for $\boldsymbol\Theta$ that is defined as a hyperrectangle, the projection term $J_n$ is easily computable.
     See \eqref{eq:j1} and \eqref{eq:j2} as an example in the two dimensional case.
     It is also worth noting, as discussed in \cite{KushnerYin2003}, there are other feasible construction of $\boldsymbol\Theta$.
   \end{remark}

As mentioned above, we are interested in the study of asymptotic properties of confidence regions that we will construct recursively in Section~\ref{sec:conc}. These asymptotic properties crucially depend on the asymptotic properties of our recursive (point) estimators. One of such required properties is asymptotic normality.
As discussed earlier, we will modify the base estimator $\tilde \theta$ to the effect of producing a recursive estimator that is asymptotically normal. In the next section we will construct such estimator, denoted there as $\hat \theta$, and we will study its asymptotic properties in the spirit of the method proposed by Fisher~\cite{Fisher1925}. Motivated by finding estimators that share the same asymptotic property as maximum likelihood estimators (MLEs), Fisher proposed in \cite{Fisher1925} that if an estimator is $\sqrt{n}$-consistent (see below), then appropriate modification of the estimator has the same asymptotic normality as the MLE. This subject was further studied by LeCam in \cite{LeCam1956} and \cite{LeCam1960}, where a more general class of observation than i.i.d. observations are considered.

Accordingly, we will show that $\tilde{\theta}$ is strongly consistent, and, moreover it maintains $\sqrt{n}$ convergence rate, i.e.
\begin{align}\label{eq:rootncon}
\bE_{\theta^*}\|\tilde{\theta}_n-\theta^*\|^2=O(n^{-1}).
\end{align}
An estimator that satisfies this equality is said to be $\sqrt{n}$-\textit{consistent}.

For convenience, throughout, we will use the notation $\Delta_n:=\tilde{\theta}_n-\theta^*$, $n\geq 1$.

Next two results show that $\tilde{\theta}$ is strongly consistent and $\sqrt{n}$-\textit{consistent}. The proofs of these results are deferred to the Appendix~\ref{sec:techSup}.

   \begin{proposition}\label{pr:consistency}
     Assume that R1-R3, and \eqref{eq:K2} are satisfied,
     then
     \begin{eqnarray}\label{eq:consistency}
     \lim_{n\to\infty}\tilde{\theta}_n=\theta^*, \quad \bP_{\theta^*}-a.s.
     \end{eqnarray}
   \end{proposition}

\begin{proposition}\label{th:rootncon}
  Assume that \eqref{eq:K1}, \eqref{eq:K2} and \eqref{eq:boundedE} hold. Then,
  $$
  \bE_{\theta^*}\|\tilde{\theta}_n-\theta^*\|^2=O(n^{-1}).
  $$
\end{proposition}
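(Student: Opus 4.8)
The plan is to refine the recursion for $\bE_{\theta^*}\|\Delta_n\|^2$ obtained in the proof of Proposition~\ref{pr:consistency} so as to extract the $O(n^{-1})$ rate, rather than merely summability. Starting from the conditional-expectation identity and applying \eqref{eq:ec} as before, I would now use the full force of \eqref{eq:K1}: since $b_n(\theta^*)=0$ is not assumed directly, I first write $\Delta^T_{n-1}b_n(\theta^*+\Delta_{n-1}) \le -K_1\|\Delta_{n-1}\|^2$ from \eqref{eq:K1} (with $\theta=\theta^*+\Delta_{n-1}$, which is legitimate because $\Delta_{n-1}$ is $\sF_{n-1}$-measurable, by the remark following R8). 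Taking full expectations then yields, with $u_n:=\bE_{\theta^*}\|\Delta_n\|^2$,
\begin{align*}
u_n \le \Bigl(1 - \frac{2\beta K_1}{n} + \frac{c\beta^2}{n^2}\Bigr)u_{n-1} + \frac{c\beta^2}{n^2}.
\end{align*}
(The term $\|b_n(\theta_1)-b_n(\theta_2)\|\le K_2\|\theta_1-\theta_2\|$ from \eqref{eq:K2} and the bound \eqref{eq:boundedE} are what guarantee all the quantities here are finite; in particular one should first check $u_n<\infty$ for every $n$, e.g. by induction using $\|\psi_n\|\le\|b_n\|+\|\psi_n-b_n\|$ together with \eqref{eq:K2} and \eqref{eq:boundedE}.)

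Next I would invoke the standard deterministic lemma on recursions of the form $u_n \le (1 - a/n + O(n^{-2}))u_{n-1} + b/n^2$: when $a=2\beta K_1 > 1$ (which holds by the standing choice $\beta K_1 > \tfrac12$), one gets $u_n = O(n^{-1})$. The mechanism: choose $N$ large enough that $1 - \frac{2\beta K_1}{n} + \frac{c\beta^2}{n^2} \le 1 - \frac{a'}{n}$ for all $n\ge N$, where $1 < a' < 2\beta K_1$; then show by induction that $u_n \le C/n$ for a suitable constant $C$ (depending on $N$, $u_N$, $a'$, and $c\beta^2$), using the elementary inequality $\bigl(1-\frac{a'}{n}\bigr)\frac{C}{n-1} + \frac{c\beta^2}{n^2} \le \frac{C}{n}$, which reduces to $C(a'-1) \ge c\beta^2\frac{n-1}{n}$ and hence holds once $C \ge \frac{c\beta^2}{a'-1}$ and $C$ is also large enough to absorb the finitely many terms $n \le N$. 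This is the place where $\beta K_1 > \tfrac12$ is essential: if $a=2\beta K_1\le 1$ the same argument only gives $u_n = O(n^{-2\beta K_1 + \varepsilon})$, a slower rate.

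The main obstacle is not any single deep step but rather the bookkeeping needed to make the ``all quantities finite'' claim rigorous and to verify that the $\sF_{n-1}$-measurable substitution $\theta = \theta^*+\Delta_{n-1}$ into \eqref{eq:ec} and \eqref{eq:K1} is valid — this is exactly what the second part of the remark after R8 (via Proposition~\ref{prop:mtheta}) is there to license, so I would cite it explicitly. A secondary subtlety is that \eqref{eq:K1} as stated controls $(\theta-\theta^*)^T b_n(\theta)$, which is precisely $\Delta^T_{n-1}b_n(\theta^*+\Delta_{n-1})$, so no use of \eqref{eq:K2} is actually needed for the core estimate; \eqref{eq:K2} and \eqref{eq:boundedE} enter only to ensure $u_1 < \infty$ (and inductively $u_n<\infty$), which is needed before the recursion inequality is meaningful. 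I would therefore structure the write-up as: (1) finiteness of $u_n$ for all $n$; (2) derivation of the scalar recursion displayed above; (3) the deterministic induction giving $u_n \le C/n$; and conclude.
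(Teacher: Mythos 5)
Your proof is correct, and its skeleton is the same as the paper's: reduce to a scalar recursion of the form $u_n \le (1-\tfrac{p}{n})u_{n-1}+O(n^{-2})$ with $p=2\beta K_1>1$ and extract the rate $O(n^{-1})$. You differ from the paper in two technical choices, both defensible. First, to obtain the recursion the paper splits $\psi_n(\tilde\theta_{n-1})=b_n(\tilde\theta_{n-1})+V_n(\tilde\theta_{n-1})$, uses that the cross term with the martingale difference $V_n$ vanishes in expectation, and then invokes \eqref{eq:boundedE} for $\bE\|V_n\|^2$ and \eqref{eq:K2} (with $b_n(\theta^*)=0$) for $\|b_n(\tilde\theta_{n-1})\|$; you instead bound $\bE[\|\psi_n(\tilde\theta_{n-1})\|^2\mid\sF_{n-1}]$ directly via \eqref{eq:ec}, which reuses inequality \eqref{eq:edfl} from the consistency proof and makes \eqref{eq:K2}, \eqref{eq:boundedE} superfluous except for bookkeeping --- a correct observation (and in fact finiteness of $u_n$ already follows by induction from \eqref{eq:ec} alone, since $\tilde\theta_0$ is deterministic, so even that use is optional). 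Second, to solve the recursion the paper unrolls the product and appeals to $\prod_{j}(1-p/j)\sim n^{-p}$, whereas you run the induction $u_n\le C/n$ with $1<a'<2\beta K_1$ and $C\ge c\beta^2/(a'-1)$; your verification $C(a'-1)\ge c\beta^2\tfrac{n-1}{n}$ is right. Your route is arguably cleaner here: the paper's displayed bound drops the product $\prod_{k=j+1}^n(1-p/k)$ inside the sum and then asserts $\sum_{j=m}^n 1/j^2\sim 1/n$, which as written is not correct (that tail sum is $\Theta(1)$); one must keep the product to get the $1/n$ rate, and your induction sidesteps this entirely. The appeal to Proposition~\ref{prop:mtheta} to justify substituting the $\sF_{n-1}$-measurable $\tilde\theta_{n-1}$ into \eqref{eq:ec} and \eqref{eq:K1} is exactly the right citation.
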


\subsection{Quasi-asymptotically linear estimator}\label{section:ale}

In this section we define a new estimator denoted as $\{\hat{\theta}_n, n\geq1\}$ and given recursively by
\begin{equation}\label{eq:hre}
  \begin{aligned}
  \hat{\theta}_n&=-I^{-1}(\tilde{\theta}_n)I_n\tilde{\theta}_n+I^{-1}(\tilde{\theta}_n)\Gamma_n,\\
\Gamma_n&=\frac{n-1}{n}\Gamma_{n-1}+\frac{1}{n}(\textrm{Id}+\beta I_n)\psi_n(\tilde{\theta}_{n-1})+\frac{\beta}{n}I_nJ_n,\\
I_n&=\frac{n-1}{n}I_{n-1}+\frac{1}{n}\Psi_n(\tilde{\theta}_{n-1}), \quad n\geq1,\\
\Gamma_0&=0, \quad I_0=0,
  \end{aligned}
  \end{equation}
where $\textrm{Id}$ is the unit matrix. Since $\tilde{\theta}_n$, $I_n$, and $\Gamma_n$ are updated from time $n-1$ based on the new observation $Z_n$ available at time $n$, then the estimator $\hat{\theta}$ indeed is recursive.
This estimator will be used in Section 6 for recursive construction of confidence regions for $\theta^*$.

\begin{remark} In the argument below we will use the following representations of $\Gamma_n$ and $I_n$,
\[
\Gamma_n=\frac{1}{n}\sum_{j=1}^n\left[\left(\textrm{Id}+\beta I_j\right)\psi_j(\tilde{\theta}_{j-1})+\beta I_jJ_j\right],\quad I_n=\frac{1}{n}\sum_{i=1}^{n}\Psi_i(\tilde{\theta}_{i-1}).
\]
\end{remark}

\noindent
Next, we will show that $\hat{\theta}$ is weakly consistent and asymptotically normal. We will derive asymptotic normality of  $\hat{\theta}$ from the property of quasi-asymptotic linearity, which is related to the property of asymptotic linearity (cf. \cite{Shiryayev1984}), and which is defined as follows:
\begin{definition}\label{def:GALE}
  An estimator $\{\bar{\theta}_n, n\geq1\}$  of $\theta^*$ is called a \textit{quasi-asymptotically linear estimator} if there exist a $\bP_{\theta^*}$-convergent, adapted matrix valued process $G$, and adapted vector valued processes $\vartheta$ and $\varepsilon$, such that
  $$
    \bar{\theta}_n-\vartheta_n=\frac
  {G_n}{n}\sum_{i=1}^n\psi_i(\theta^*)+\varepsilon_n,\ n\geq1,\quad  \vartheta_n\xrightarrow[n \to \infty ]{\bP_{\theta^*}}\theta^*,\quad \sqrt{n}\varepsilon_n\xrightarrow[n \to \infty]{\bP_{\theta^*}}0.$$
\end{definition}

 Our definition of quasi-asymptotically linear estimator is motivated by the classic concept of asymptotically linear estimator (see e.g. \cite{Sharia2010}):  $\check {\theta}$ is called (locally) asymptotically linear if there exists a matrix process $\{\check G_n,n\geq1\}$ such that
  \begin{align*}
    \check {\theta}_n-\theta^*=\check G_n\sum_{i=1}^{n}\psi_i(\theta^*)+\varepsilon_n,
  \end{align*}
  where $\check G_n^{-1/2}\varepsilon_n\xrightarrow[n \to \infty ]{\bP_{\theta^*}}0$.
Asymptotic linearity is frequently used in the proof of asymptotic normality of estimators.
However, in general, asymptotic linearity can not be reconciled with the full recursiveness of the point estimator.
The recursiveness of the point estimator is the key property involved in construction of recursive confidence regions. As it will be shown below, the fully recursive estimator $\hat \theta$ is quasi-asymptotically linear. 

In what follows, we will make use of the following representation for $\hat\theta$
\begin{equation}
  \label{eq:HthetaAlt}
   \hat{\theta}_n=-I^{-1}(\tilde{\theta}_n)I_n\tilde{\theta}_n+\frac{1}{n}I^{-1}(\tilde{\theta}_n)\sum_{j=1}^n\left[\left(\textrm{Id}+\beta I_j\right)\psi_j(\tilde{\theta}_{j-1})+\beta I_jJ_j\right].
\end{equation}

\begin{theorem}\label{th:mainth}
  Assume that R1--R8 hold, then the estimator $\hat{\theta}$ is $\bP_{\theta^*}$--weakly consistent.\footnote{That is, $
  \hat{\theta}_n\xrightarrow[n\to\infty]{\bP_{\theta^*}}\theta^*.
  $}\\
  Moreover, $\hat{\theta}$ is quasi-asymptotically linear estimator for $\theta^*$.
\end{theorem}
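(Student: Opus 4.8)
The plan is to prove the two claims in sequence, establishing weak consistency first and then extracting the quasi-asymptotically linear representation, since the latter will in fact subsume the former once the error terms are controlled. I would start from the representation \eqref{eq:HthetaAlt} of $\hat\theta_n$ and rewrite it by Taylor-expanding $\psi_j(\tilde\theta_{j-1})$ around $\theta^*$. Using R1 (three times differentiability) we have, for each $j$,
\[
\psi_j(\tilde\theta_{j-1})=\psi_j(\theta^*)+\Psi_j(\theta^*)\Delta_{j-1}+R_j,
\]
where $\Delta_{j-1}=\tilde\theta_{j-1}-\theta^*$ and $R_j$ is a second-order remainder controlled by $\|\mH\psi_j\|$ along the segment, hence $\bE_{\theta^*}\|R_j\|\lesssim \bE_{\theta^*}\|\Delta_{j-1}\|^2=O(j^{-1})$ by R7 and Proposition~\ref{th:rootncon}. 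Substituting this into \eqref{eq:HthetaAlt} and organizing terms, the leading contribution should be $\frac1n I^{-1}(\tilde\theta_n)\sum_{j=1}^n\psi_j(\theta^*)$, which — since $I(\tilde\theta_n)\to I(\theta^*)$ by strong consistency of $\tilde\theta$ (Proposition~\ref{pr:consistency}) and continuity of $I(\cdot)$ in R8, and since $I_n\to -I(\theta^*)$ by Proposition~\ref{pr:ergodicmaxima}(i) applied to $\Psi$ together with R2 (recall $\bE_{\theta^*}[\Psi_1(\theta^*)]=\mH\bE_{\theta^*}[\pi_1(\theta^*)]=-I(\theta^*)$ by the information identity) — gives the matrix process $G_n=I^{-1}(\tilde\theta_n)$ of the definition, which is $\bP_{\theta^*}$-convergent. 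The $\beta I_j$ cross-terms and the $-I^{-1}(\tilde\theta_n)I_n\tilde\theta_n$ term should recombine with the $\Psi_j(\theta^*)\Delta_{j-1}$ pieces to produce the drift-corrected quantity $\vartheta_n$; the point of the specific choice of coefficients $(\mathrm{Id}+\beta I_n)$ in \eqref{eq:hre} is precisely that this cancellation occurs, leaving $\vartheta_n\to\theta^*$.

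The key technical step is to show that the residual, call it $\varepsilon_n$, satisfies $\sqrt n\,\varepsilon_n\xrightarrow{\bP_{\theta^*}}0$. This splits into (a) the Taylor remainder sum $\frac1n I^{-1}(\tilde\theta_n)\sum_j (\mathrm{Id}+\beta I_j)R_j$, for which $\sqrt n\cdot\frac1n\sum_{j=1}^n O(j^{-1/2})$ in $L^1$ is $O(n^{-1/2}\log n)\to 0$ after using boundedness of $I_j$ in an appropriate averaged sense (here R4–R6 via Proposition~\ref{prop:mtheta}, and R2 give the needed integrability/Lipschitz bounds, so that $\bE_{\theta^*}\|(\mathrm{Id}+\beta I_j)R_j\|=O(j^{-1})$); (b) the fluctuation of the empirical quantities $I_n$, $I(\tilde\theta_n)$ around their limits multiplied by $\frac1n\sum_j\psi_j(\theta^*)$ — here I would use that $\frac1{\sqrt n}\sum_{j=1}^n\psi_j(\theta^*)$ is stochastically bounded (a CLT-type bound; R9 supplies the Lindeberg/negligibility condition \eqref{eq:clt} and R8 the nondegenerate limiting covariance), so it suffices that $I^{-1}(\tilde\theta_n)-(-I^{-1}(\theta^*))\cdot$etc. is $o_{\bP}(n^{-1/2})\cdot\sqrt n=o_\bP(1)$, which follows from the $\sqrt n$-consistency of $\tilde\theta_n$ (Proposition~\ref{th:rootncon}) together with the Lipschitz/continuity properties of $I$ and the rate of convergence of $I_n$; and (c) handling $\Delta_{j-1}$-linear terms that did not fully cancel, using R5 \eqref{eq:K2}–\eqref{eq:K3} to bound $\|b_j(\tilde\theta_{j-1})\|$ and $\bE_{\theta^*}\|\Psi_j(\tilde\theta_{j-1})-\Psi_j(\theta^*)\|\le K_3\bE_{\theta^*}\|\Delta_{j-1}\|=O(j^{-1/2})$, again summing to $o(\sqrt n)$.

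Once the representation $\hat\theta_n-\vartheta_n=\frac{G_n}{n}\sum_{i=1}^n\psi_i(\theta^*)+\varepsilon_n$ with $G_n=I^{-1}(\tilde\theta_n)\xrightarrow{\bP_{\theta^*}}I^{-1}(\theta^*)$, $\vartheta_n\xrightarrow{\bP_{\theta^*}}\theta^*$, and $\sqrt n\,\varepsilon_n\xrightarrow{\bP_{\theta^*}}0$ is in hand, quasi-asymptotic linearity is established by definition. Weak consistency then drops out for free: $\frac1n\sum_{i=1}^n\psi_i(\theta^*)\to\bE_{\theta^*}[\psi_1(\theta^*)]=0$ a.s.\ by Proposition~\ref{pr:ergodicmaxima}(i), so $\frac{G_n}{n}\sum_i\psi_i(\theta^*)\to 0$ in probability, $\varepsilon_n\to 0$, and $\vartheta_n\to\theta^*$, whence $\hat\theta_n\xrightarrow{\bP_{\theta^*}}\theta^*$.

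\textbf{Main obstacle.} I expect the delicate part to be step (b) above: showing that the fluctuations of $I_n$ and of $I(\tilde\theta_n)$ around $-I(\theta^*)$ and $I(\theta^*)$, when multiplied by the $O_\bP(\sqrt n)$ quantity $\sum_{i=1}^n\psi_i(\theta^*)$, still vanish after scaling by $\sqrt n$. This requires a genuine rate on $I_n+I(\theta^*)$ — not just a.s.\ convergence — which must be squeezed out of the ergodic averaging of $\Psi_i(\tilde\theta_{i-1})$ combined with the $\sqrt n$-consistency of $\tilde\theta_{i-1}$ and the Lipschitz bound \eqref{eq:K3}; getting the bookkeeping right so that the $\log n$ factors from the harmonic-type sums do not spoil the $n^{-1/2}$ decay is where the care is needed. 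A secondary subtlety is justifying the differentiation-under-the-integral and the information identity $\bE_{\theta^*}[\Psi_1(\theta^*)]=-I(\theta^*)$, but these are exactly what R1, R2, and R8 are designed to give.
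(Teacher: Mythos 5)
Your overall strategy coincides with the paper's: a Taylor expansion linking $\sum_j\psi_j(\tilde\theta_{j-1})$ to $\sum_j\psi_j(\theta^*)$, the identification $G_n=I^{-1}(\tilde\theta_n)$ and the random centering $\vartheta_n=-I^{-1}(\tilde\theta_n)I_n\theta^*$, control of the quadratic remainder via \eqref{eq:K4} and the $\sqrt n$-consistency of Proposition~\ref{th:rootncon} (yielding the same $O(\log n/\sqrt n)$ bound), the information identity together with \eqref{eq:K3} to obtain $I_n\to-I(\theta^*)$, and the ergodic theorem to read off weak consistency at the end. However, the one step you leave as an assertion --- ``the cancellation occurs'' --- is precisely the heart of the proof, and your setup makes it harder than necessary. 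The paper expands in the opposite direction, writing $\psi_i(\theta^*)=\psi_i(\tilde\theta_{i-1})-\Psi_i(\tilde\theta_{i-1})\Delta_{i-1}+(\text{quadratic term})$, so that the linear coefficient is $\Psi_i(\tilde\theta_{i-1})$, exactly the summand appearing in $I_n$. The cancellation is then an explicit identity: substituting the telescoped recursion $\Delta_{i-1}=\Delta_n-\sum_{j=i}^n\frac{\beta}{j}\psi_j(\tilde\theta_{j-1})$ into $-\frac1n\sum_i\Psi_i(\tilde\theta_{i-1})\Delta_{i-1}$ and exchanging the order of summation gives $-I_n\Delta_n+\frac{\beta}{n}\sum_i I_i\psi_i(\tilde\theta_{i-1})$, which is what makes the weights $(\mathrm{Id}+\beta I_i)$ in \eqref{eq:hre} appear verbatim and collapses \eqref{eq:HthetaAlt} to $\hat\theta_n+I^{-1}(\tilde\theta_n)I_n\theta^*=\frac{I^{-1}(\tilde\theta_n)}{n}\sum_i\psi_i(\theta^*)-I^{-1}(\tilde\theta_n)B_n$. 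With your expansion around $\theta^*$ the linear coefficient is $\Psi_j(\theta^*)$, which does not match $I_j$, so an additional replacement step (again via \eqref{eq:K3} and Proposition~\ref{th:rootncon}) would be needed inside the linear term before any telescoping can be applied. This is fixable, but the decisive computation must actually be written down; at present it is only named.

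Your announced ``main obstacle'' --- needing a genuine rate on $I_n+I(\theta^*)$ because it multiplies the $O_{\bP_{\theta^*}}(\sqrt n)$ quantity $\sum_i\psi_i(\theta^*)$ --- does not in fact arise. In the representation above, $I_n$ enters only through $\vartheta_n$ (where it multiplies the constant $\theta^*$), and the coefficient of $\frac1n\sum_i\psi_i(\theta^*)$ is exactly $G_n=I^{-1}(\tilde\theta_n)$. Definition~\ref{def:GALE} requires only that $G_n$ be a convergent process and that $\vartheta_n\to\theta^*$ in probability, with no rate on either; this is precisely what the ``quasi'' buys relative to ordinary asymptotic linearity. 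The only place a rate is genuinely needed is the remainder $B_n$, which you handle correctly.
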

\noindent The proof is differed to the Appendix~\ref{sec:techSup}.

The next result, which will be used in analysis of asymptotic properties of the recursive confidence region for $\theta^*$ in Section 6, is an application of Theorem~\ref{th:mainth}.

\begin{proposition}\label{th:anormal}
  Assume that R1--R9 are satisfied. Then, there exists an adapted process $\vartheta$ such that
 \begin{equation}\label{1}
  \vartheta_n\xrightarrow[n\to\infty]{\bP_{\theta^*}}\theta^*,
 \end{equation}
  and
   \begin{equation}\label{2}
  \sqrt{n}(\hat{\theta}_n-\vartheta_n)\xrightarrow[n\to\infty]{d}\mathcal{N}(0,I^{-1}(\theta^*)).
  \end{equation}
\end{proposition}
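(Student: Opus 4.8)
The plan is to combine the quasi-asymptotic linearity of $\hat\theta$ established in Theorem~\ref{th:mainth} with a martingale central limit theorem applied to the normalized sum $\frac{1}{\sqrt n}\sum_{i=1}^n\psi_i(\theta^*)$. From the definition of quasi-asymptotic linearity we have the representation
\begin{equation*}
  \hat\theta_n-\vartheta_n=\frac{G_n}{n}\sum_{i=1}^n\psi_i(\theta^*)+\varepsilon_n,\qquad
  \vartheta_n\xrightarrow[n\to\infty]{\bP_{\theta^*}}\theta^*,\qquad
  \sqrt n\,\varepsilon_n\xrightarrow[n\to\infty]{\bP_{\theta^*}}0,
\end{equation*}
with $G_n=I^{-1}(\tilde\theta_n)\to I^{-1}(\theta^*)$ $\bP_{\theta^*}$-a.s.\ (by \eqref{eq:limIplus}). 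Multiplying by $\sqrt n$, the term $\sqrt n\,\varepsilon_n$ vanishes in probability, so by Slutsky it suffices to show
\begin{equation*}
  \frac{1}{\sqrt n}\sum_{i=1}^n\psi_i(\theta^*)\xrightarrow[n\to\infty]{d}\mathcal N\bigl(0,I(\theta^*)\bigr),
\end{equation*}
since then $G_n\cdot\frac{1}{\sqrt n}\sum_{i=1}^n\psi_i(\theta^*)\xrightarrow{d}I^{-1}(\theta^*)\,\mathcal N(0,I(\theta^*))=\mathcal N(0,I^{-1}(\theta^*)I(\theta^*)I^{-1}(\theta^*))=\mathcal N(0,I^{-1}(\theta^*))$, again by Slutsky (convergence of a random matrix to a constant times a weakly convergent vector).

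The first substantive point is that $\{\psi_i(\theta^*)\}_{i\geq1}$ is a martingale difference sequence with respect to $\mathbb F$ under $\bP_{\theta^*}$. Indeed, by R1--R2 and the identity $b_i(\theta^*)=\bE_{\theta^*}[\psi_i(\theta^*)\mid\sF_{i-1}]=\nabla\int_\bR p_{\theta^*}(Z_{i-1},y)\,dy=\nabla 1=0$, each $\psi_i(\theta^*)$ is $\sF_i$-measurable, integrable (R2), and centered given $\sF_{i-1}$. Moreover, by stationarity of $Z$ under $\bP_{\theta^*}$ (Remark~\ref{yummy}) and ergodicity, the conditional second-moment matrices satisfy $\bE_{\theta^*}[\psi_i(\theta^*)\psi_i^T(\theta^*)\mid\sF_{i-1}]$ averaging to $\bE_{\theta^*}[\psi_1(\theta^*)\psi_1^T(\theta^*)]=I(\theta^*)$, which is where Proposition~\ref{th:marbirk} (the ergodic theorem) enters: $\frac1n\sum_{i=1}^n\bE_{\theta^*}[\psi_i(\theta^*)\psi_i^T(\theta^*)\mid\sF_{i-1}]\to I(\theta^*)$ $\bP_{\theta^*}$-a.s., giving the conditional-variance (Lindeberg--Lévy) condition of the martingale CLT. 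The Lindeberg-type negligibility condition is supplied precisely by Assumption~R9, equation \eqref{eq:clt}, which says $\bE_{\theta^*}[\sup_{0\leq i\leq n}|n^{-1/2}\psi_i(\theta^*)|]\to0$; this controls the maximal summand and hence the truncated second moments. Invoking a standard multivariate martingale central limit theorem (e.g.\ Hall--Heyde) then yields the desired weak convergence of $\frac{1}{\sqrt n}\sum_{i=1}^n\psi_i(\theta^*)$ to $\mathcal N(0,I(\theta^*))$.

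The main obstacle I anticipate is verifying the conditional-variance condition rigorously: one must argue that $\psi_i(\theta^*)\psi_i^T(\theta^*)$ is integrable (this needs a square-integrability input, which follows from R4 at $\theta=\theta^*$ or from R8 combined with R7 — one should check that the Fisher-information finiteness in R8 indeed gives $\bE_{\theta^*}\|\psi_1(\theta^*)\|^2<\infty$) and then apply the ergodic theorem to the stationary sequence $\{\psi_i(\theta^*)\psi_i^T(\theta^*)\}$, noting that stationarity of this sequence under $\bP_{\theta^*}$ follows from that of $Z$. A secondary care point is that the CLT gives convergence of $\frac{1}{\sqrt n}\sum\psi_i(\theta^*)$ jointly with $G_n\to I^{-1}(\theta^*)$ a.s., so that the product converges in distribution to $\mathcal N(0,I^{-1}(\theta^*))$ — this is routine via Slutsky once the a.s.\ convergence of $G_n$ and the weak convergence of the sum are both in hand. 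Finally, one sets the process $\vartheta$ in the statement to be exactly the $\vartheta_n=-I^{-1}(\tilde\theta_n)I_n\theta^*$ produced in the proof of Theorem~\ref{th:mainth}, which already satisfies \eqref{1}, completing the argument.
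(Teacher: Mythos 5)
Your proposal is correct and follows essentially the same route as the paper: the paper likewise takes $\vartheta_n=-I^{-1}(\tilde\theta_n)I_n\theta^*$, invokes the quasi-asymptotic linearity representation from Theorem~\ref{th:mainth}, applies a multivariate martingale CLT (Proposition~\ref{prop:clt} in the appendix, built on the Crimaldi--Pratelli result with R9 supplying the negligibility condition and ergodicity/R8 supplying convergence of the quadratic variation) to get $\tfrac{1}{\sqrt n}\sum_{i=1}^n\psi_i(\theta^*)\xrightarrow{d}\mathcal N(0,I(\theta^*))$, and concludes by Slutsky using $G_n\to I^{-1}(\theta^*)$. The only cosmetic difference is that you re-derive the CLT inline where the paper cites its appendix proposition.
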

\noindent See Appendix~\ref{sec:techSup} for the proof.

We end this section with the following technical result, which will be used in our construction of confidence region in Section 6. Towards this end, for any $\theta\in\mathbf{\Theta}$ and $n\geq 1$, we define\footnote{We use superscripts here to denote components of vectors and matrices.}
\begin{align}\label{eq:defUn}
U_n(\theta)&: =n(\hat{\theta}_n-\theta)^TI(\tilde{\theta}_n)(\hat{\theta}_n-\theta) \\
& \ =n\sum^d_{i=1}\sum^d_{j=1}\sigma^{ij}_n(\hat{\theta}^i_n-\theta^i)(\hat{\theta}^j_n-\theta^i), \nonumber
\end{align}
where $(\sigma_n^{ij})_{i,j=1,\ldots,d}=I(\tilde{\theta}_n)$, and, as usual, we denote by $\chi^2_d$ a random variable that has the chi-squared distribution with $d$ degrees of freedom.

\begin{corollary}\label{th:chisquare}
  With $\vartheta_n=-I^{-1}(\tilde{\theta}_n)I_n\theta^*$, we have that
    $$
  U_n(\vartheta_n)\xrightarrow[n\to\infty]{d}\chi^2_d.
  $$
\end{corollary}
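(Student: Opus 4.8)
The plan is to express $U_n(\vartheta_n)$ as a quadratic form in the rescaled estimation error $W_n := \sqrt{n}(\hat{\theta}_n-\vartheta_n)$, namely $U_n(\vartheta_n) = W_n^T I(\tilde{\theta}_n) W_n$, and then to pass to the limit using the asymptotic normality from Proposition~\ref{th:anormal} together with the strong consistency of $\tilde{\theta}$ and the continuity of the Fisher information matrix.

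First I would invoke Proposition~\ref{th:anormal} with the prescribed centering $\vartheta_n=-I^{-1}(\tilde{\theta}_n)I_n\theta^*$, which gives $W_n \xrightarrow{d} W$ with $W\sim\mathcal{N}(0,I^{-1}(\theta^*))$. Next, by Proposition~\ref{pr:consistency} we have $\tilde{\theta}_n\to\theta^*$ $\bP_{\theta^*}$-a.s., and since $I(\cdot)$ is continuous by Assumption~R8, $I(\tilde{\theta}_n)\to I(\theta^*)$ $\bP_{\theta^*}$-a.s., hence also in probability. Because the limiting matrix $I(\theta^*)$ is deterministic, the pair $(W_n,I(\tilde{\theta}_n))$ converges jointly in distribution to $(W,I(\theta^*))$ by Slutsky's theorem. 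The map $(w,A)\mapsto w^TAw$ is continuous, so the continuous mapping theorem yields $U_n(\vartheta_n)=W_n^T I(\tilde{\theta}_n) W_n \xrightarrow{d} W^T I(\theta^*) W$.

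Finally I would identify the limiting law. By Assumption~R8 the matrix $I(\theta^*)$ is symmetric and positive definite, so it has a symmetric positive-definite square root $I^{1/2}(\theta^*)$; writing $W=I^{-1/2}(\theta^*)\xi$ with $\xi\sim\mathcal{N}(0,\textrm{Id})$, we obtain $W^T I(\theta^*) W=\xi^T\xi$, which has the $\chi^2_d$ distribution. This gives $U_n(\vartheta_n)\xrightarrow{d}\chi^2_d$. There is no genuine obstacle in this argument; the only points requiring a little care are that the joint convergence of $(W_n,I(\tilde{\theta}_n))$ is legitimate precisely because $I(\tilde{\theta}_n)$ converges to a constant matrix (so Slutsky applies), and that the argument of $U_n$ here is exactly the centering sequence $\vartheta_n$ appearing in Proposition~\ref{th:anormal}, so that no residual first-order terms survive in the limit.
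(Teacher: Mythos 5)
Your proof is correct and follows essentially the same route as the paper: Proposition~\ref{th:anormal} for the asymptotic normality of $\sqrt{n}(\hat{\theta}_n-\vartheta_n)$, strong consistency of $\tilde{\theta}$ plus continuity of $I(\cdot)$ from R8, Slutsky's theorem, and identification of the limiting quadratic form as $\chi^2_d$. The only cosmetic difference is that the paper first normalizes by $I^{1/2}(\tilde{\theta}_n)$ to get convergence to $\mathcal{N}(0,\textrm{Id})$ and then squares, whereas you pass to the limit in the quadratic form directly via joint convergence and the continuous mapping theorem.
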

\begin{proof}
  From Assumption~R8, strong consistency of $\tilde{\theta}$ and Proposition~\ref{th:anormal},  and employing the Slutsky's theorem again, we get that
  $$
  \sqrt{nI(\tilde{\theta}_n)}(\hat{\theta}_n-\vartheta_n)\xrightarrow[n\to\infty]{d}\cN(0,\textrm{Id}).
  $$
   Therefore,
  $$
  U_n(\vartheta_n)=n(\hat{\theta}_n-\vartheta_n)^TI(\tilde{\theta}_n)(\hat{\theta}_n-\vartheta_n)\xrightarrow{d} \xi^T \xi,
  $$
  where $\xi \sim \mathcal{N}(0,\textrm{Id}).$ The proof is thus complete since $\xi^T \xi \overset{d}{=} \chi^2_d.$

\end{proof}

\section{Recursive construction of confidence regions}\label{sec:conc}

This section is devoted to the construction of the recursive confidence region based on quasi-asymptotically linear estimator $\hat{\theta}$ developed in Section~\ref{section:ale}.
We start with introducing the definition of the approximated confidence region.
\begin{definition} Let $V_n:\bR^{n+1}\to2^{\mathbf{\Theta}}$ be a set valued function such that $V_n(z)$ is a connected set\footnote{A connected set is a set that cannot be represented as the union of two or more disjoint nonempty open subsets.} for any $z\in\mathbb{R}^{n+1}$.
The set $V_n(Z_0^n)$, with $Z_0^n:=(Z_0,\ldots,Z_n)$,  is called an \textit{approximated confidence region} for $\theta^*$, at significance level $\alpha\in(0,1)$, if there exists a weakly consistent estimator $\vartheta$ of $\theta^*$,  such that
  $$
  \lim_{n\to\infty}\bP_{\theta^*}(\vartheta_n\in V_n(Z_0^n))=1-\alpha.
  $$
\end{definition}

Such approximated confidence region can be constructed, as next result shows, by
using the asymptotic results obtained in Section~\ref{section:ale}.
Recall the notation $U_n(\theta) = n(\hat{\theta}_n-\theta)^TI(\tilde{\theta}_n)(\hat{\theta}_n-\theta)$, for $\theta\in\mathbf{\Theta}, \,n\geq1$.

\begin{proposition}
  Fix a confidence level $\alpha$, and let $\kappa\in\bR$ be such that $\bP_{\theta^*}(\chi_d^2<\kappa)=1-\alpha$.
  Then, the set
  $$
  \cT_n:=\{\theta\in\mathbf{\Theta}:U_n(\theta)<\kappa\}
  $$
  is an approximated confidence region for $\theta^*$.
\end{proposition}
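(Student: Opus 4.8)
The plan is to verify directly the two requirements in the definition of an approximated confidence region: that $\cT_n=V_n(Z_0^n)$ is connected, and that there is a weakly consistent estimator $\vartheta$ of $\theta^*$ with $\bP_{\theta^*}(\vartheta_n\in\cT_n)\to 1-\alpha$. For the second requirement I would take precisely the estimator appearing in Corollary~\ref{th:chisquare}, namely $\vartheta_n=-I^{-1}(\tilde{\theta}_n)I_n\theta^*$; by \eqref{1} (equivalently \eqref{eq:limthetanstar}) this process is adapted and $\bP_{\theta^*}$-weakly consistent, so it is an admissible choice. Then $\{\vartheta_n\in\cT_n\}=\{U_n(\vartheta_n)<\kappa\}$, and Corollary~\ref{th:chisquare} gives $U_n(\vartheta_n)\xrightarrow[n\to\infty]{d}\chi^2_d$. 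Since the $\chi^2_d$ law is absolutely continuous, $\kappa$ is a continuity point of its distribution function, so convergence in distribution together with the choice of $\kappa$ yields
$$
\lim_{n\to\infty}\bP_{\theta^*}(\vartheta_n\in\cT_n)=\lim_{n\to\infty}\bP_{\theta^*}(U_n(\vartheta_n)<\kappa)=\bP_{\theta^*}(\chi^2_d<\kappa)=1-\alpha .
$$

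For connectedness, I would observe that by Assumption~R8 the matrix $I(\tilde{\theta}_n)$ is (almost surely) symmetric positive definite, hence $\theta\mapsto U_n(\theta)=n(\hat{\theta}_n-\theta)^TI(\tilde{\theta}_n)(\hat{\theta}_n-\theta)$ is a strictly convex quadratic form centered at $\hat{\theta}_n$. Consequently the sublevel set $\{\theta\in\bR^d:U_n(\theta)<\kappa\}$ is an open solid ellipsoid, in particular convex, and $\cT_n$ is its intersection with $\mathbf{\Theta}$; since $\mathbf{\Theta}$ is convex (in particular $\mathbf{\Theta}=\bR^d$, as in the Remark following \eqref{eq:tre}), $\cT_n$ is convex and therefore connected. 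This gives $V_n(z)$ connected for every $z$.

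There is no real obstacle here: all the analytic content — weak consistency of $\vartheta_n$, asymptotic normality of $\sqrt{n}(\hat{\theta}_n-\vartheta_n)$, and the resulting $\chi^2_d$ limit of $U_n(\vartheta_n)$ — has already been established in Theorem~\ref{th:mainth}, Proposition~\ref{th:anormal} and Corollary~\ref{th:chisquare}. The only points requiring a word of care are that $\kappa$ is a continuity point of the limit law (immediate from absolute continuity of $\chi^2_d$) and that the geometry of $U_n$ forces $\cT_n$ to be connected (immediate from positive definiteness of $I(\tilde{\theta}_n)$), both handled above. Hence $\cT_n$ is an approximated confidence region for $\theta^*$ at significance level $\alpha$.
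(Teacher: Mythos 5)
Your proof is correct and follows essentially the same route as the paper: the same choice $\vartheta_n=-I^{-1}(\tilde{\theta}_n)I_n\theta^*$, weak consistency from Proposition~\ref{th:anormal}, and the identification $\{\vartheta_n\in\cT_n\}=\{U_n(\vartheta_n)<\kappa\}$ combined with Corollary~\ref{th:chisquare}. If anything you are slightly more careful than the paper on the two minor points: you justify passing to the limit via the continuity of the $\chi^2_d$ distribution function at $\kappa$, and your connectedness argument (positive definiteness of $I(\tilde{\theta}_n)$ making the sublevel set a convex ellipsoid, intersected with a convex $\mathbf{\Theta}$) is sounder than the paper's bare appeal to continuity of $U_n$, which by itself does not imply connected sublevel sets.
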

\begin{proof}  As in Section~\ref{section:ale}, we take $\vartheta_n=-I^{-1}(\hat{\theta}_n)I_n\theta^*$, which in view of Proposition~\ref{th:anormal} is a weakly consistent estimator of $\theta^*$.
Note that $U_n(\, \cdot\, )$ is a continuous function, and thus $\cT_n$ is a connected set, for any $n\geq1$.
By Corollary~\ref{th:chisquare}, $U_n(\vartheta_n)\xrightarrow{d}\chi_d^2$, and since
  $
  \bP_{\theta^*}(\vartheta_n\in\cT_n)=\bP_{\theta^*}(U_n(\vartheta_n)<\kappa),
  $
we immediately have that $\lim_{n\to\infty}\bP_{\theta^*}(\vartheta_n\in\cT_n)=1-\alpha$.
This concludes the proof.
\end{proof}

Next, we will show that the approximated confidence region $\cT_n$ can be computed in a recursive way, by taking into account its geometric structure. By the definition, the set $\cT_n$ is the interior of a $d$-dimensional ellipsoid, and hence $cT_n$ is uniquely determined by its extreme $2d$ points. Thus, it is enough to establish a recursive formula for computing the extreme points.
Let us denote by
$$
(\theta_{n,k}^1,\ldots,\theta_{n,k}^d), \quad k=1,\ldots,2d,
$$
the coordinates of these extreme points; that is $\theta_{n,k}^i$, denotes the $i$th coordinate of the $k$th extreme point of ellipsoid $\cT_n$.

First, note that the matrix $I(\tilde{\theta}_n)$ is positive definite, and hence it admits the Cholesky decomposition:
  $$
  I(\tilde{\theta}_n)=L_nL^T_n=\begin{bmatrix}
  l^{11}_n & 0 & \cdots & 0 \\
  l^{21}_n & l^{22}_n & \cdots & 0 \\
  \vdots & \vdots & & \vdots \\
  l^{d1}_n & l^{d2}_n & \cdots & l^{dd}_n
  \end{bmatrix}
  \begin{bmatrix}
  l^{11}_n & l^{21}_n & \cdots & l^{d1}_n \\
  0 & l^{22}_n & \cdots & l^{d2}_n \\
  \vdots & \vdots & \cdots & \vdots \\
  0 & 0 & \cdots & l^{dd}_n
  \end{bmatrix},
  $$
  where $l^{ij}_n$ $i,j=1,\ldots,d$, are given by
  \begin{align*}
    l^{ii}_n&=\sqrt{\sigma^{ii}_n-\sum^{i-1}_{k=1}(l^{ik}_n)^2},\\
    l^{ij}_n&=\frac{1}{l^{ii}_n}\Big(\sigma^{ij}_n-\sum^{j-1}_{k=1}l^{ik}_nl^{jk}_n\Big).
  \end{align*}
  Thus, we have that $U_n(\theta)= n(u_{n,1}^2(\theta)+u_{n,2}^2(\theta)+\cdots+u_{n,d}^2(\theta))$, where
  \begin{align*}
    u_{n,i}(\theta)&=\sum_{j=i}^{d}l^{ji}_n(\hat{\theta}^j_n-\theta^j), \quad i=1,\ldots,d,
  \end{align*}
  and thus $\cT_n=\{\theta:\sum_{j=1}^d(u_{n,j}(\theta))^2<\frac{\kappa}{n}\}$.

By making the coordinate transformation $\theta\mapsto \rho$ given by $\rho=L_n^T(\hat{\theta}_n-\theta)$, the set $\cT_n$ in the new system of coordinates can be written as $\cT_n=\{\rho:\sum_{i=1}^{d}(\rho^i)^2<\frac{\kappa}{n}\}$.
Hence, $\cT_n$, in the new system of coordinates, is determined by the following $2d$ extreme points of the ellipsoid:
\begin{align*}
  (\rho^1_1,\ldots,\rho^d_1)&=(\sqrt{\frac{\kappa}{n}},0,\ldots,0),\\
   (\rho^1_2,\ldots,\rho^d_2)&=(-\sqrt{\frac{\kappa}{n}},0,\ldots,0),\\
   &\ldots\\
  (\rho^1_{2d-1},\ldots,\rho^d_{2d-1})&=(0,\ldots,0,\sqrt{\frac{\kappa}{n}}),\\
  (\rho^1_{2d},\ldots,\rho^d_{2d})&=(0,\ldots,0,-\sqrt{\frac{\kappa}{n}}).
\end{align*}
Then, in the original system of coordinates, the extreme points (written as vectors) are given by
  \begin{equation}\label{eq:extremepoints}
  \begin{aligned}
  (\theta_{n,2j-1}^1,\ldots,\theta_{n,2j-1}^d)^T&=\hat{\theta}_n-\sqrt{\frac{\kappa}{n}}(L_n^T)^{-1}e_j,\\
  (\theta_{n,2j}^1,\ldots,\theta_{n,2j}^d)^T&=\hat{\theta}_n+\sqrt{\frac{\kappa}{n}}(L_n^T)^{-1}e_j,
  \end{aligned}\ \qquad j=1,\ldots,d,
  \end{equation}
 where $\{e_j\}$, $j=1,\ldots, d$, is the standard basis in $\bR^d$.

Finally, taking into account the recursive constructions \eqref{eq:tre}, \eqref{eq:hre}, and the representation \eqref{eq:extremepoints}, we have the following recursive scheme for computing the approximate confidence region.

\noindent \textbf{Recursive construction of the confidence region}
    \begin{align*}
\quad\   \textrm{Initial Step:} \qquad&  \Gamma_0=0, \quad I_0=0, \quad \tilde{\theta}_0\in\mathbf{\Theta}. \\
   n^{\textrm{th}} \ \textrm{Step:}\qquad & \\
   \textrm{Input:} \quad & \tilde{\theta}_{n-1},I_{n-1},\Gamma_{n-1},Z_{n-1},Z_n.  \\
   \textrm{Output:} \quad &  \tilde{\theta}_n=\tilde{\theta}_{n-1}+\frac{\beta}{n}\psi_n(\tilde{\theta}_{n-1})+\frac{\beta}{n}J_n,\\
&I_n=\frac{n-1}{n}I_{n-1}+\frac{1}{n}\Psi_n(\tilde{\theta}_{n-1}),\\
&\Gamma_n=\frac{n-1}{n}\Gamma_{n-1}+\frac{1}{n}\left[(\textrm{Id}+\beta I_n)\psi_n(\tilde{\theta}_{n-1})+\beta I_nJ_n\right],\\
&(\theta_{n,2j}^1,\ldots,\theta_{n,2j}^d)^T
        =-I^{-1}(\tilde{\theta}_n)I_n\tilde{\theta}_n+I^{-1}(\tilde{\theta}_n)\Gamma_n+\sqrt{\frac{\kappa}{n}}(I_n^{-1/2})^Te_j,\\
&(\theta_{n,2j-1}^1,\ldots,\theta_{n,2j-1}^d)^T
        =-I^{-1}(\tilde{\theta}_n)I_n\tilde{\theta}_n+I^{-1}(\tilde{\theta}_n)\Gamma_n-\sqrt{\frac{\kappa}{n}}(I_n^{-1/2})^Te_j,\\
            & \qquad\qquad \qquad j=1,\ldots,d.
    \end{align*}
From here, we also conclude that there exists a function $\tau$, independent of $n$, such that
  \begin{equation}\label{nice}
  \cT_n=\tau(\cT_{n-1},Z_n).
  \end{equation}
The above recursive relationship goes to heart of application of recursive confidence regions in the robust adaptive control theory originated in \cite{BCCCJ2016}, since it makes it possible to take the full advantage of the dynamic programming principle in the context of such control problems.

We conclude this section by proving that the confidence region converges to the singleton $\theta^*$.
Equivalently, it is enough to prove that the extreme points converge to the true parameter $\theta^*$.

 \begin{proposition}
    For any $k\in\{1,\ldots,2d\}$, we have that
    $$
   \bP_{\theta^*}\!\textrm{-}\! \lim_{n\rightarrow\infty}\theta_{n,k}=\theta^*.
    $$
  \end{proposition}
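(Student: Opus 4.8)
The plan is to recall the explicit formula \eqref{eq:extremepoints} for the extreme points,
\[
\theta_{n,2j-1} = \hat{\theta}_n - \sqrt{\tfrac{\kappa}{n}}\,(L_n^T)^{-1}e_j, \qquad
\theta_{n,2j} = \hat{\theta}_n + \sqrt{\tfrac{\kappa}{n}}\,(L_n^T)^{-1}e_j,
\]
and to show that the perturbation term $\sqrt{\kappa/n}\,(L_n^T)^{-1}e_j$ vanishes in probability while $\hat{\theta}_n$ itself converges to $\theta^*$. The second fact is already in hand: Theorem~\ref{th:mainth} gives $\hat{\theta}_n\xrightarrow{\bP_{\theta^*}}\theta^*$. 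So the whole argument reduces to controlling the perturbation.

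First I would observe that $L_n$ is the Cholesky factor of $I(\tilde{\theta}_n)$, i.e. $I(\tilde{\theta}_n)=L_nL_n^T$. By the strong consistency of $\tilde{\theta}$ (Proposition~\ref{pr:consistency}) and the continuity of $\theta\mapsto I(\theta)$ together with its positive-definiteness at every point (Assumption~R8), we have $I(\tilde{\theta}_n)\to I(\theta^*)$ $\bP_{\theta^*}$-a.s., and $I(\theta^*)$ is a fixed positive-definite matrix. Since the Cholesky factorization is a continuous map on the open cone of positive-definite matrices, $L_n\to L^*$ a.s., where $L^*(L^*)^T=I(\theta^*)$; in particular $L_n$ is invertible for large $n$ and $(L_n^T)^{-1}\to (L^{*T})^{-1}$ a.s. Hence $\|(L_n^T)^{-1}e_j\|$ is $\bP_{\theta^*}$-a.s. bounded in $n$, so $\sqrt{\kappa/n}\,(L_n^T)^{-1}e_j\to 0$ $\bP_{\theta^*}$-a.s., and a fortiori in probability.

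Combining the two convergences via the triangle inequality,
\[
\|\theta_{n,k}-\theta^*\| \le \|\hat{\theta}_n-\theta^*\| + \sqrt{\tfrac{\kappa}{n}}\,\|(L_n^T)^{-1}e_j\|,
\]
the right-hand side tends to $0$ in $\bP_{\theta^*}$-probability (the first term by Theorem~\ref{th:mainth}, the second a.s. as just shown), which yields $\bP_{\theta^*}\textrm{-}\lim_{n\to\infty}\theta_{n,k}=\theta^*$ for each $k\in\{1,\ldots,2d\}$. There is no serious obstacle here; the only point that needs a word of care is the continuity of the Cholesky map, which is standard, and — if one prefers to avoid it entirely — one can instead bound $\|(L_n^T)^{-1}e_j\|^2 \le \|I^{-1}(\tilde{\theta}_n)\|$ using $L_n^{-1}(L_n^T)^{-1}=I^{-1}(\tilde{\theta}_n)$, and invoke \eqref{eq:limIplus} directly to conclude $\|(L_n^T)^{-1}e_j\|$ is a.s.\ bounded. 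Either route closes the proof.
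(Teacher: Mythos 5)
Your proposal is correct and follows essentially the same route as the paper: both use the strong consistency of $\tilde{\theta}$ together with Assumption~R8 to get $L_n\to I^{1/2}(\theta^*)$ a.s., deduce that the perturbation $\sqrt{\kappa/n}\,(L_n^T)^{-1}e_j$ vanishes, and then combine this with the weak consistency of $\hat{\theta}$ from Theorem~\ref{th:mainth}. Your added remarks on the continuity of the Cholesky map (and the alternative bound via $I^{-1}(\tilde{\theta}_n)$) merely make explicit a step the paper leaves implicit.
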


  \begin{proof}

    By Assumption~R8 and Theorem~\ref{pr:consistency} (strong consistency of $\tilde{\theta}$), we have  that
    $
    L_n\xrightarrow[n\to\infty]{\textrm{a.s.}}I^{1/2}(\theta^*),
    $
    and consequently, we also have that
    \begin{equation}\label{eq:eL}
    \sqrt{\frac{\kappa}{n}}e^T_jL^{-1}_n\xrightarrow[n\to\infty]{\textrm{a.s}}0.
    \end{equation}
   Of course, the last convergence holds true in the weak sense too.
Passing to the limit in \eqref{eq:extremepoints}, in $\bP_{\theta^*}$~probability sense,  and using \eqref{eq:eL} and weak consistency of $\hat\theta$ (Theorem~\ref{th:mainth}), we finish the proof.

  \end{proof}

\section{Examples}\label{sec:ex}
In this section we will present three illustrative examples of the recursive construction of confidence regions developed above.
We start with our main example,  Example~\ref{ex:markov}, of a Markov chain with Gaussian transitional densities where both the conditional mean and conditional standard deviation are the parameters of interest. Example~\ref{ex:iid} is dedicated to the case of i.i.d. Gaussian observations, which is a particular case of the first example.

Generally speaking, the simple case of i.i.d. observations for which the MLE exists and asymptotic normality holds true, one can recursively represent the sequence of confidence intervals constructed in the usual (off-line) way, and the theory developed in this paper is not really needed. The idea is illustrated in Example~\ref{ex:mle} by considering again the same experiment as in Example~\ref{ex:iid}. In fact, as mentioned above, this idea served as the starting point for the general methodology presented in the paper.

\begin{example}\label{ex:markov}
  Let us consider a Markov process $\{Z_n\}$  with a Gaussian transition density function
  $$
  p_\theta(x,y)=\frac{1}{\sqrt{1-\rho^2}\sqrt{2\pi}\sigma}e^{-\frac{(y-\rho x-(1-\rho)\mu)^2}{2\sigma^2(1-\rho^2)}}, \quad n\geq1,
  $$
  and such that $Z_0\sim \cN(\mu,\sigma^2)$.

  We assume that the correlation parameter $\rho\in(-1,1)$ is known,  and the unknown parameter is $\theta=(\mu,\sigma^2)\in(-\infty,\infty)\times(0,\infty)$. The pair of true parameters $(\mu^*,(\sigma^*)^2)$ lies in the interior of $\boldsymbol\Theta=[a_1,a_2]\times[b_1,b_2]$, and $a_1\leq a_2, \ b_1\leq b_2$ are some fixed real numbers with $b_1>0$.

In the Appendix~\ref{sec:techSup} we show that the process $Z$ satisfies the Assumption M, and the conditions R0-R8.

Thus, all the results derived in the previous sections hold true. Moreover, for a given confidence level $\alpha$, we have the following explicit formulas for the $n$th step of the recurrent construction of the confidence regions:
\begin{align*}
  \tilde\mu_n&=\tilde\mu_{n-1}+\frac{\beta(Z_n-\rho Z_{n-1}-(1-\rho)\tilde\mu_{n-1})}{n\tilde\sigma_{n-1}^2(1+\rho)}+\frac{\beta}{n}J^1_n,\\
  \tilde\sigma_n^2&=\tilde\sigma_{n-1}^2-\frac{\beta}{n\tilde\sigma_{n-1}}+\frac{\beta(Z_n-\rho Z_{n-1}-(1-\rho)\tilde\mu_{n-1})^2}{n(1-\rho^2)\tilde\sigma_{n-1}^3})+\frac{\beta}{n}J^2_n,\\
  I_n&=\frac{n-1}{n}I_{n-1}+\frac{1}{n}\begin{bmatrix}
  -\frac{1-\rho}{(1+\rho)\tilde\sigma_{n-1}^2} & -\frac{2(Z_n-\rho Z_{n-1}-(1-\rho)\tilde\mu_{n-1})}{(1+\rho)\tilde\sigma_{n-1}^3}\\
  -\frac{2(Z_n-\rho Z_{n-1}-(1-\rho)\tilde\mu_{n-1})}{(1+\rho)\tilde\sigma_{n-1}^3} & \frac{1}{\tilde\sigma_{n-1}^2}-\frac{3(Z_n-\rho Z_{n-1}-(1-\rho)\tilde\mu_{n-1})^2}{(1-\rho^2)\tilde\sigma_{n-1}^4}
  \end{bmatrix},\\
  \Gamma_n&=\frac{n-1}{n}\Gamma_{n-1}+\frac{1}{n}(\textrm{Id}+\beta I_n)\begin{bmatrix}
  \frac{Z_n-\rho Z_{n-1}-(1-\rho)\tilde\mu_{n-1}}{\tilde\sigma_{n-1}^2(1+\rho)}\\
  -\frac{1}{\tilde\sigma_{n-1}}+\frac{(Z_n-\rho Z_{n-1}-(1-\rho)\tilde\mu_{n-1})^2}{(1-\rho^2)\tilde\sigma_{n-1}^3})
  \end{bmatrix}+\frac{\beta I_n}{n}\begin{bmatrix}
  J^1_n\\
  J^2_n
  \end{bmatrix},
  \end{align*}
and, for $j\in\set{1,2,3,4}$,
\begin{align*}
  \begin{bmatrix}
  \mu_{n,j}\\
  \sigma_{n,j}^2
  \end{bmatrix}&=-\begin{bmatrix}
  \frac{(1+\rho)\tilde{\sigma}^2_n}{1-\rho} & 0\\
  0 & \frac{\tilde\sigma^2_n}{2}
  \end{bmatrix}I_n\begin{bmatrix}
  \tilde\mu_{n}\\
  \tilde\sigma_{n}^2
  \end{bmatrix}+\begin{bmatrix}
  \frac{(1+\rho)\tilde{\sigma}^2_n}{1-\rho} & 0\\
  0 & \frac{\tilde\sigma^2_n}{2}
  \end{bmatrix}\Gamma_n + \varpi_j\frac{\kappa}{n}\begin{bmatrix}
  \sqrt{\frac{1+\rho}{1-\rho}}\tilde{\sigma}_n & 0\\
  0 & \frac{\tilde\sigma_n}{\sqrt{2}}
  \end{bmatrix}u_j,
\end{align*}
where  $\varpi_1=\varpi_3=-1$, $\varpi_2=\varpi_4=1$, $u_1=u_2=e_1$, $u_3=u_4=e_2$, $\beta$ is a constant such that $\beta>\frac{b_2^3}{4b_1}$, $\beta>\frac{(1+\rho)b_2^3}{2(1-\rho)b_1}$, and $\bP_{\theta^*}(\chi_2^2<\kappa)=1-\alpha$. The projection terms $J^1_n$ and $J^2_n$ are defined as follows,
\begin{equation}\label{eq:j1}
J^1_n=\left\{
\begin{tabular}{cl}
  $\frac{n}{\beta}(a_1-\accentset{\circ}{\mu}_n)$, \quad & $a_1>\accentset{\circ}{\mu}_n$, \\
  $\frac{n}{\beta}(a_2-\accentset{\circ}{\mu}_n)$, \quad & $a_2<\accentset{\circ}{\mu}_n$, \\
  0, \quad & otherwise, \\
\end{tabular}
\right.
\end{equation}
and
\begin{equation}\label{eq:j2}
J^2_n=\left\{
\begin{tabular}{cl}
  $\frac{n}{\beta}(b_1-\accentset{\circ}{\sigma}^2_n)$, \quad & $b_1>\accentset{\circ}{\sigma}^2_n$, \\
  $\frac{n}{\beta}(b_2-\accentset{\circ}{\sigma}^2_n)$, \quad & $b_2<\accentset{\circ}{\sigma}^2_n$, \\
  0, \quad & otherwise, \\
\end{tabular}
\right.
\end{equation}
where $\accentset{\circ}{\mu}_n=\tilde\mu_{n-1}+\frac{\beta(Z_n-\rho Z_{n-1}-(1-\rho)\tilde\mu_{n-1})}{n\tilde\sigma_{n-1}^2(1+\rho)}$, and $\accentset{\circ}{\sigma}^2_n=\tilde\sigma_{n-1}^2-\frac{\beta}{n\tilde\sigma_{n-1}}+\frac{\beta(Z_n-\rho Z_{n-1}-(1-\rho)\tilde\mu_{n-1})^2}{n(1-\rho^2)\tilde\sigma_{n-1}^3}$.

\end{example}

\begin{example}\label{ex:iid} Let $Z_n,\ n\geq0$, be a sequence of i.i.d. Gaussian random variables with an unknown mean $\mu$ and unknown standard deviation $\sigma$. Clearly, this important case is a particular case of Example~\ref{ex:markov}, with $\rho=0$, and the same recursive formulas for confidence regions by taking $\rho=0$ in the above formulas.
\end{example}

\begin{example}\label{ex:mle}
We take the same setup as in the previous example - i.i.d Gaussian random variables with unknown mean and standard deviation.
We will use the fact that in this case, the MLE estimators for $\mu$ and $\sigma^2$ are computed explicitly and given by
  $$
    \hat{\mu}_{n}=\frac{1}{n+1}\sum_{i=0}^{n}Z_i,\quad
    \hat{\sigma}^2_{n}=\frac{1}{n+1}\sum_{i=0}^{n}(Z_i-\hat{\mu}_n)^2, \quad n\geq1,
  $$
It is well known that $(\hat{\mu},\hat{\sigma}^2)$ are asymptotically  normal, namely
  \begin{align*}
    \sqrt{n}(\hat{\mu}_n-\mu^*,\hat{\sigma}^2_n-(\sigma^{*})^2)\xrightarrow[n\to\infty]{d}\cN(0,I^{-1}),
  \end{align*}
  where
  \begin{align*}
    I=\begin{bmatrix}
  (\sigma^{*})^2 & 0\\
  0 & 2(\sigma^{*})^4
  \end{bmatrix}.
  \end{align*}
First, note that $(\hat{\mu}_n,\hat{\sigma}^2_n)$ satisfies the following recursion:
\begin{equation}\label{eq:recMLE}
  \begin{aligned}
    \hat{\mu}_{n}&=\frac{n}{n+1}\hat{\mu}_{n-1}+\frac{1}{n+1}Z_{n},\\
    \hat{\sigma}^2_{n}&=\frac{n}{n+1}\hat{\sigma}^2_{n-1}+\frac{n}{(n+1)^2}(\hat{\mu}_n-Z_{n})^2,\quad n\geq1.
  \end{aligned}
\end{equation}
Second, due to asymptotic normality,   we also have that,  $U_n\xrightarrow[n\to\infty]{d}\chi_2^2$, where
$U_n:=\frac{n}{\hat{\sigma}^2_n}(\hat{\mu}_n-\mu^*)^2+\frac{n}{2\hat{\sigma}^{4}_n}(\hat{\sigma}^2_n-(\sigma^{*})^2)^2$.
Now, for a given confidence level $\alpha$, we let $\kappa\in\bR$ be such that $\bP_{\theta^*}(\chi^2_2<\kappa)=1-\alpha$, and
then, the confidence region for $(\mu,\sigma^2)$ is given by
  $$
  \cT_n:=\left\{(\mu,\sigma^2)\in\bR^2:\frac{n}{\hat{\sigma}^2_n}(\hat{\mu}_n-\mu)^2+\frac{n}{2\hat{\sigma}^{4}_n}(\hat{\sigma}^2_n-\sigma^{2})^2<\kappa\right\}.
  $$
Similar to the previous cases, we note that $\cT_n$ is the interior of an ellipse (in $\bR^2$), that is uniquely determined by its extreme points
\begin{align*}
&    (\mu_{n,1},\sigma^2_{n,1})=\left(\hat{\mu}_n+\sqrt{\frac{\kappa}{n}}\hat{\sigma}_n,\hat{\sigma}_n^2\right),
&&    (\mu_{n,2},\sigma^2_{n,2})=\left(\hat{\mu}_n-\sqrt{\frac{\kappa}{n}}\hat{\sigma}_n,\hat{\sigma}_n^2\right),\\
&    (\mu_{n,3},\sigma^2_{n,3})=\left(\hat{\mu}_n,\left(1+\sqrt{\frac{2\kappa}{n}}\right)\hat{\sigma}^2_n\right),
&&(\mu_{n,4},\sigma^2_{n,4})=\left(\hat{\mu}_n,\left(1-\sqrt{\frac{2\kappa}{n}}\right)\hat{\sigma}^2_n\right).
\end{align*}
  Therefore, taking into account \eqref{eq:recMLE}, we have a recursive formula for computing these extreme points, and thus the desired recursive construction of the confidence regions $\cT_n$.
\end{example}

%
%
%
%
%

\begin{appendix}
\section{Appendix}
\subsection{Ergodic Markov Chains}\label{sec:A1}
In this section, we will briefly recall some facts from the ergodic theory of Markov processes in discrete time.
Let $X$ be a time homogeneous Markov chain on a probability space $(\Omega, \cF,\bP)$, which takes values in a measurable space $(\cX,\mathfrak{X})$.
We refer to \cite[Chapter~4, Definition~2.6]{Revuz1984} for the definition of ergodicity for Markov processes.
If $X$ is an ergodic process under $\bP$, then it is also a stationary process, i.e. for any $n\geq1$, the law of $(X_j,X_{j+1},\ldots,X_{j+n})$ under $\bP$ is independent of $j$, $j\geq0$.

As usual, we denote by $\bE_{\bP}$ the expectation under $\bP$.
In view of  the classical Birkhoff's Ergodic Theorem, we have the following result, that will be used in this paper.
  \begin{proposition}\label{th:marbirk}
    Let $X$ be ergodic. Then for any $g$ such that $\bE_{\bP}[g(X_0,\ldots,X_n)]<\infty$, we have
    $$
    \lim_{N\to\infty}\frac{1}{N}\sum_{i=0}^{N-1}g(X_i,\ldots,X_{i+n})=\bE_{\bP}[g(X_0,\ldots,X_n)] \quad \bP-a.s.
    $$
  \end{proposition}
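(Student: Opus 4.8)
The plan is to reduce the statement to the classical Birkhoff Ergodic Theorem (stated above) by constructing an appropriate measurable function on the sample path space to which Birkhoff applies. First I would fix $n$ and define, on the canonical space $(\cX^{\bN},\mathfrak{X}^{\bN})$, the function $f:\cX^{\bN}\to\bR$ by $f(\omega):=g(\omega_0,\omega_1,\ldots,\omega_n)$. Since $g$ depends only on the first $n+1$ coordinates and is assumed to satisfy $\bE^Q_\pi[g(X_0,\ldots,X_n)]<\infty$, the function $f$ is measurable with respect to $\mathfrak{X}^{\bN}$ and lies in $L^1(\cX^{\bN},\mathfrak{X}^{\bN},\bP^Q_\pi)$, with $\bE^Q_\pi[f]=\bE^Q_\pi[g(X_0,\ldots,X_n)]$.

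The key observation is the identity $f(T^i\omega)=g\big((T^i\omega)_0,\ldots,(T^i\omega)_n\big)=g(\omega_i,\omega_{i+1},\ldots,\omega_{i+n})=g(X_i,\ldots,X_{i+n})$, which follows directly from the definition of the shift map $T$ and of the canonical process $X_k(\omega)=\omega(k)$. Therefore the Cesàro averages in the statement are exactly the Birkhoff averages $\frac{1}{N}\sum_{i=0}^{N-1} f(T^i\omega)$. Since $X$ is assumed ergodic, by Definition~\ref{def:ergodicmarkov} the dynamical system $(\cX^{\bN},\mathfrak{X}^{\bN},\bP^Q_\pi,T)$ is ergodic, so Birkhoff's Ergodic Theorem applies to $f\in L^1$ and yields
$$
\lim_{N\to\infty}\frac{1}{N}\sum_{i=0}^{N-1}f(T^i\omega)=\bE^Q_\pi[f]\quad \bP^Q_\pi\text{-a.s.}
$$
Substituting back $f(T^i\omega)=g(X_i,\ldots,X_{i+n})$ and $\bE^Q_\pi[f]=\bE^Q_\pi[g(X_0,\ldots,X_n)]$ gives exactly the claimed limit.

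I do not expect any serious obstacle here; the argument is essentially bookkeeping, verifying that the block function $f$ is measurable and integrable and that it intertwines correctly with the shift. The one point that deserves a line of care is confirming $f\in L^1$: this is immediate because $f$ is a composition of the coordinate projections (measurable) with $g$, and its integral equals $\bE^Q_\pi[g(X_0,\ldots,X_n)]$, finite by hypothesis. With that in hand, the conclusion is a direct application of Birkhoff's theorem, so the proof is short.
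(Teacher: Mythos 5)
Your proposal is correct and follows exactly the paper's own argument: define the block function $f(\omega)=g(\omega(0),\ldots,\omega(n))$ on the canonical space, observe that $f(T^i\omega)=g(X_i,\ldots,X_{i+n})$, and apply Birkhoff's Ergodic Theorem to the ergodic system $(\cX^{\bN},\mathfrak{X}^{\bN},\bP^Q_\pi,T)$. Nothing is missing.
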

Next, we provide a brief discussion regarding sufficient conditions for the Markov chain $X$ to be ergodic.
Let $Q:\cX\times\mathfrak{X}\to[0,1]$ be the transition kernel of $X$. A probability measure $\pi$ on $(\cX,\mathfrak{X})$ is called an invariant measure of $Q$ if
  $$
  \int Q(x,A)d\pi(x)=\pi(A).
  $$
Let $\bP_\pi$ be the probability measure on $(\Omega,\cF)$ that is induced by $\pi$.
  \begin{proposition}\label{th:uniinv}
    If a transition kernel $Q$ has a unique invariant probability measure $\pi$, then $X$ is ergodic under $\bP_\pi$.
  \end{proposition}

  One powerful tool for checking the uniqueness of invariant probability measure is the notion of positive Harris chain. There are several equivalent definitions of positive Harris Markov chain, and we will use the one from \cite{HernandezLasserre2000}.

  \begin{definition}\label{def:pHarris}
    The Markov chain $X$ with transition kernel $Q$ is called a positive Harris chain if
    \begin{enumerate}[(a)]
      \item there exists a $\sigma$-finite measure $\mu$ on $\mathfrak{X}$ such that for any $x_0\in\cX$, and $B\in\mathfrak{X}$ with $\mu(B)>0 $
          $$
          \bP(X_n\in B \text{ for some } n<\infty|X_0=x_0)=1,
          $$
      \item there exists an invariant probability measure for $Q$.
    \end{enumerate}
  \end{definition}
\begin{remark}\label{prop:HarrisErgodic}
It is well known (cf. e.g. \cite{MeynTweedie1993}) that a positive Harris chain admits a unique invariant measure.
Thus, in view of Proposition~\ref{th:uniinv},  a positive Harris chain is also ergodic.
  \end{remark}

\subsection{CLT for Multivariate Martingales}
In this section, for a  matrix $A$ with real valued entries we denote by $|A|$ the sum of the absolute values of its entries.

In \cite{CrimaldiPratelli2005} Proposition~3.1, the authors gave the following version of the central limit theorem for discrete time multivariate martingales.
\begin{proposition}\label{CP}
  On a probability space $(\Omega,\sF,\bP)$ let $D=\{D_{n,j},0\leq j\leq k_n,n\geq1\}$ be a triangular array of $d$-dimensional real random vectors, such that, for each $n$, the finite sequence $\{D_{n,j},\1\leq j\leq k_n\}$ is a martingale difference process with respect to some filtration $\{\sF_{n,j},j\geq0\}$.
  Set
  $$
  D_n^*=\sup_{1\leq j\leq k_n}|D_{n,j}|, \quad U_n=\sum_{j=1}^{k_n}D_{n,j}D^T_{n,j}.
  $$
  Also denote by $\sU$ the $\sigma$-algebra generated by $\bigcup_j\sH_j$ where $\sH_j:=\liminf_n\sF_{n,j}$.
  Suppose that $D_n^*$ converges in $L^1$ to zero and that $U_n$ converges in probability to a $\sU$ measurable $d$-dimensional, positive semi-definite matrix $U$.   Then, the random vector $\sum_{j=1}^{k_n}D_{n,j}$ converges $\sU$-stably to the Gaussian kernel $\cN(0,U)$.
\end{proposition}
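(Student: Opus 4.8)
The plan is to reduce the multivariate statement to a scalar stable central limit theorem for martingale difference arrays via the Cram\'er--Wold device, and then to establish that scalar statement by the classical ``telescoping of conditional characteristic functions'' argument, localized so as to accommodate the fact that the filtrations $\{\sF_{n,j}\}$ are permitted to depend on $n$ and need not be nested.

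\emph{Reduction to the scalar case.} Fix $\lambda\in\bR^d$, put $S_n:=\sum_{j=1}^{k_n}D_{n,j}$ and $\xi_{n,j}:=\lambda^T D_{n,j}$. For each $n$ the array $\{\xi_{n,j}\}$ is a scalar $\{\sF_{n,j}\}$-martingale difference array with $\sup_j|\xi_{n,j}|\le\norm{\lambda}\,D_n^*\to 0$ in $L^1$ and $\sum_j\xi_{n,j}^2=\lambda^T U_n\lambda\xrightarrow{\bP}\lambda^T U\lambda=:U_\lambda$, where $U_\lambda\ge 0$ is $\sU$-measurable. Since $\sU$-stable convergence of $S_n$ to the kernel $\cN(0,U)$ is equivalent to $\bE[e^{\,i\lambda^T S_n}\1_H]\to\bE[e^{-\frac12 U_\lambda}\1_H]$ for every $\lambda\in\bR^d$ and every $H$ in an algebra generating $\sU$, it suffices to prove, for each fixed $\lambda$, that $\lambda^T S_n=\sum_j\xi_{n,j}$ converges $\sU$-stably to $\cN(0,U_\lambda)$; I would carry out the remaining steps for this scalar array.

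\emph{Localization.} Because $\sU=\sigma\!\big(\bigcup_j\sH_j\big)$ with $\sH_j=\liminf_n\sF_{n,j}$, it is enough to verify $\bE[e^{\,it\sum_i\xi_{n,i}}\1_H]\to\bE[e^{-\frac12 t^2 U_\lambda}\1_H]$ for $t\in\bR$ and $H\in\sH_j$ with $j$ fixed; then $H\in\sF_{n,j}$ for all large $n$. Split $\sum_i\xi_{n,i}=R_{n,j}+T_{n,j}$ with $R_{n,j}=\sum_{i\le j}\xi_{n,i}$ and $T_{n,j}=\sum_{i>j}\xi_{n,i}$. The head is a sum of $j$ terms, so $|R_{n,j}|\le j\,\norm{\lambda}\,D_n^*\to 0$ in $L^1$; hence $e^{\,it(R_{n,j}+T_{n,j})}$ may be replaced by $e^{\,itT_{n,j}}$ up to a vanishing $L^1$-error. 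Conditioning on $\sF_{n,j}\supseteq\sigma(H)$ gives $\bE[e^{\,itT_{n,j}}\1_H]=\bE[\1_H\,\bE(e^{\,itT_{n,j}}\mid\sF_{n,j})]$, so by dominated convergence it remains to show the conditional central limit theorem $\bE(e^{\,itT_{n,j}}\mid\sF_{n,j})\xrightarrow{\bP}e^{-\frac12 t^2 U_\lambda}$.

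\emph{The conditional CLT and the main obstacle.} This last step is the standard estimate. After a harmless truncation (replacing $D_{n,i}$ by its compensated increment on the event $\{D_n^*\le 1\}$, whose complement has probability $\to 0$) one may assume bounded increments. Writing $\varphi_{n,i}(t):=\bE(e^{\,it\xi_{n,i}}\mid\sF_{n,i-1})$, one bounds $\big|\bE(e^{\,itT_{n,j}}\mid\sF_{n,j})-\prod_{i>j}\varphi_{n,i}(t)\big|$ and $\big|\prod_{i>j}\varphi_{n,i}(t)-e^{-\frac12 t^2 U_\lambda}\big|$ using $|e^{\,ix}-1-ix+\tfrac12 x^2|\le\min(|x|^3,x^2)$, the martingale property $\bE(\xi_{n,i}\mid\sF_{n,i-1})=0$, the negligibility $\sup_i|\xi_{n,i}|\to 0$, the control $\sum_i\xi_{n,i}^2=\lambda^T U_n\lambda$ together with $U_n\xrightarrow{\bP}U$, and the elementary product-versus-exponential bookkeeping available once each factor $\varphi_{n,i}(t)-1$ is uniformly small. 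I expect the genuinely delicate point to be the localization: one must handle conditioning with respect to $\sF_{n,j}$ when $\sF_{n,j}$ varies with $n$, and prove the conditional CLT \emph{given $\sF_{n,j}$} rather than merely unconditionally --- this is precisely where the hypothesis that $U$ is measurable with respect to the limiting field $\sU$ (built from the $\liminf_n\sF_{n,j}$) is indispensable, and it is why the statement cannot be phrased within a single fixed filtration. Everything else is the classical Lindeberg--L\'evy martingale-array machinery.
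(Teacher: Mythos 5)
First, a point of comparison: the paper does not prove this proposition at all --- it is imported verbatim from Crimaldi and Pratelli (Proposition~3.1 of \cite{CrimaldiPratelli2005}), so there is no in-paper argument to measure yours against. Judged on its own terms, your outline follows the standard route for stable martingale CLTs (Cram\'er--Wold reduction, localization to sets $H\in\sH_j$ that eventually lie in $\sF_{n,j}$, discarding the head $R_{n,j}$, then a conditional characteristic-function estimate for the tail), and this is broadly the same machinery used in the literature you would cite for the scalar case (Hall--Heyde Theorem~3.2 and its stable refinements); the $\liminf_n\sF_{n,j}$ device is exactly what lets one drop the usual nesting hypothesis $\sF_{n,j}\subseteq\sF_{n+1,j}$, and you have correctly identified that this is where the $\sU$-measurability of $U$ enters.

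That said, as a proof the proposal has a genuine gap: the entire analytic content of the theorem is the step you label ``the standard estimate,'' namely $\bE\bigl(e^{\,itT_{n,j}}\mid\sF_{n,j}\bigr)\xrightarrow{\bP}e^{-\frac12 t^2U_\lambda}$, and this is asserted rather than carried out. It is not routine here, because $U_\lambda$ is $\sU$-measurable but in general not $\sF_{n,j}$-measurable, so one cannot simply factor $e^{-\frac12 t^2U_\lambda}$ out of the conditional expectation; the usual workaround is to prove $\bE\bigl[\1_H\bigl(e^{\,itT_{n,j}}+ \tfrac{}{}e^{\,itT_{n,j}}\bigl(e^{\frac12 t^2 V_{n,j}}-1\bigr)\bigr)\bigr]\to\bP(H)$ type identities with the \emph{conditional} variance $V_{n,j}$ kept inside the expectation, and then pass from $V_{n,j}$ to $U_\lambda$ using $U_n\xrightarrow{\bP}U$. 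In addition, your proposed truncation ``on the event $\{D_n^*\le1\}$'' is inadmissible as stated: $D_n^*$ depends on the whole $n$-th row, so multiplying $D_{n,i}$ by $\1_{\{D_n^*\le1\}}$ destroys the martingale-difference property. The correct device is the predictable truncation $\1_{\{\max_{l<i}|\xi_{n,l}|\le1\}}$ (or a stopping index based on the running sum of squares), followed by compensation. Both defects are repairable by standard arguments, but until they are repaired the proposal is a plan, not a proof.
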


 \begin{remark}\label{CP1}
 $\sU$-stable convergence   implies convergence in distribution; it is enough to take the entire $\Omega$ in the definition of $\sU$-stable convergence. See for example \cite{AoldousEagelson1978} or \cite{HauslerLuschgy2015}.
 \end{remark}

We will apply the above proposition to the process $\{\psi_n(\theta^*),n\geq0\}$ such that Assumption M, R8 and R9 are satisfied. To this end, let us define the triangular array $\{D_{n,j},1\leq j\leq n,n\geq1\}$ as
$$
D_{n,j}=\frac{1}{\sqrt{n}}\psi_j(\theta^*),
$$
and let us take $\sF_{n,j}=\sF_j$.

First, note that $\bE_{\theta^*}[\psi_j(\theta^*)|\sF_{j-1}]=0$, so that for any $n\geq1$, $\{D_{n,j},1\leq j\leq n\}$ is a martingale difference process with respect to $\{\sF_j,0\leq j\leq n\}$.
Next, R9 implies that $D^*_n:=\sup_{1\leq j\leq n}\frac{1}{\sqrt{n}}|\psi_j(\theta^*)|$ converges in $L^1$ to 0.
Finally, stationarity, R8 and ergodicity guarantee that
$$
U_n:=\frac{1}{n}\sum_{j=1}^{n}\psi_j(\theta^*)\psi_j^T(\theta^*)\to\bE_{\theta^*}[\psi_1(\theta^*)\psi_1^T(\theta^*)] \quad \bP_{\theta^*}-a.s.
$$
The limit $I(\theta^*)=\bE_{\theta^*}[\psi_1(\theta^*)\psi_1^T(\theta^*)]$ is positive semi-definite, and it is deterministic, so that it is measurable with respect to any $\sigma$-algebra. Therefore, applying Proposition \ref{CP} and Remark \ref{CP1} we obtain
\begin{proposition}\label{prop:clt}
  Assume that Assumption M, R8, and R9 are satisfied. Then,
  $$
  \frac{1}{\sqrt{n}}\sum_{j=1}^{n}\psi_j(\theta^*)\xrightarrow[n\to\infty]{d}\cN(0,I(\theta^*)).
  $$
\end{proposition}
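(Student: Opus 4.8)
The plan is to invoke the multivariate martingale central limit theorem recorded in Proposition~\ref{CP}, applied to the triangular array
$$
D_{n,j}=\frac{1}{\sqrt{n}}\psi_j(\theta^*),\qquad 1\leq j\leq n,\ n\geq1,
$$
with the filtration $\sF_{n,j}:=\sF_j$. Thus the whole argument reduces to checking the three hypotheses of that proposition: that, for each fixed $n$, the finite sequence $\{D_{n,j}\}_{1\leq j\leq n}$ is a martingale difference process with respect to $\{\sF_j\}$; that $D_n^*:=\sup_{1\leq j\leq n}|D_{n,j}|$ converges to $0$ in $L^1$; and that $U_n:=\sum_{j=1}^{n}D_{n,j}D_{n,j}^T$ converges in probability to a positive semidefinite matrix that is measurable with respect to the relevant limit $\sigma$-algebra $\sU$. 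Once these are in place, Proposition~\ref{CP} gives $\sU$-stable convergence of $\sum_{j=1}^{n}D_{n,j}$ to $\cN(0,U)$, and Remark~\ref{CP1} upgrades (or rather downgrades) this to convergence in distribution.

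For the martingale difference property I would first establish that $\bE_{\theta^*}[\psi_j(\theta^*)\mid\sF_{j-1}]=0$ for every $j\geq1$. Writing $\psi_j(\theta^*)=\nabla\log p_{\theta^*}(Z_{j-1},Z_j)=\nabla p_{\theta^*}(Z_{j-1},Z_j)/p_{\theta^*}(Z_{j-1},Z_j)$ and using the Markov property together with the interchange of differentiation and integration licensed by R1 (equation~\eqref{eq:HP}), one obtains
$$
\bE_{\theta^*}[\psi_j(\theta^*)\mid\sF_{j-1}]=\int_\bR\nabla p_{\theta^*}(Z_{j-1},y)\,dy=\nabla\int_\bR p_{\theta^*}(Z_{j-1},y)\,dy=\nabla 1=0 ,
$$
which is precisely the identity $b_j(\theta^*)=0$ already used in Section~\ref{sec:sqrt}. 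Since $\psi_j(\theta^*)$ is $\sF_j$-measurable and, by R8, square-integrable, $\{D_{n,j}\}_{1\leq j\leq n}$ is indeed a martingale difference array. The $L^1$-convergence $D_n^*\to0$ is nothing but Assumption~R9 (equation~\eqref{eq:clt}) rewritten for the rescaled array.

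It remains to identify $\lim_n U_n$, where $U_n=\frac1n\sum_{j=1}^{n}\psi_j(\theta^*)\psi_j^T(\theta^*)$. Here I would apply the ergodic theorem for stationary Markov chains, Proposition~\ref{th:marbirk}, to the matrix-valued function $g(x,y)=\bigl(\nabla\log p_{\theta^*}(x,y)\bigr)\bigl(\nabla\log p_{\theta^*}(x,y)\bigr)^T$, whose entries are $\bP_{\theta^*}$-integrable because, by R8, the Fisher information $I(\theta^*)=\bE_{\theta^*}[\psi_1(\theta^*)\psi_1^T(\theta^*)]$ exists. Combined with stationarity (Remark~\ref{yummy}) and ergodicity (Assumption~M(ii)), this yields $U_n\to I(\theta^*)$, $\bP_{\theta^*}$-a.s., hence in probability. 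As $I(\theta^*)$ is deterministic it is trivially $\sU$-measurable, and by R8 it is positive definite, in particular positive semidefinite. Feeding all of this into Proposition~\ref{CP} gives the asserted convergence $\frac1{\sqrt n}\sum_{j=1}^n\psi_j(\theta^*)\xrightarrow{d}\cN(0,I(\theta^*))$.

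The only genuinely delicate point — the ``main obstacle'' — is the martingale difference verification, i.e.\ the conditional-mean-zero property of the score increments: one must be careful that R1 really licenses differentiation under the integral sign at $\theta=\theta^*$. Everything else is either assumed verbatim (the Lindeberg-type control of $D_n^*$ in R9) or a direct appeal to the ergodic theorem; in particular there is no uniform-integrability subtlety in passing from a.s.\ convergence of $U_n$ to convergence in probability, since a.s.\ convergence implies convergence in probability outright.
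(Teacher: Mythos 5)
Your proof is correct and follows essentially the same route as the paper: the same triangular array $D_{n,j}=\frac{1}{\sqrt n}\psi_j(\theta^*)$ with $\sF_{n,j}=\sF_j$, the same three verifications (martingale differences, $L^1$-convergence of $D_n^*$ via R9, and $U_n\to I(\theta^*)$ via Proposition~\ref{th:marbirk}), and the same appeal to Proposition~\ref{CP} and Remark~\ref{CP1}. If anything you are more careful than the paper, which simply asserts $\bE_{\theta^*}[\psi_j(\theta^*)\mid\sF_{j-1}]=0$ without noting that this identity rests on the interchange of $\nabla$ and $\int$ from R1, an assumption not listed in the proposition's hypotheses.
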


\subsection{Technical Supplement}\label{sec:techSup}
Assumptions~R4--R6 are stated for any deterministic vector $\theta\in\mathbf{\Theta}$. In this section, we show that if \eqref{eq:K1}-\eqref{eq:K4} hold for $\theta\in\mathbf{\Theta}$, then for any random vectors $\boldsymbol{\theta},\boldsymbol{\theta}_1,\boldsymbol{\theta}_2$ that are $\sF_{n-1}$ measurable and take values in $\mathbf{\Theta}$, analogous inequalities are true.

\begin{proposition}\label{prop:mtheta}
  Assume that R4-R6 are satisfied. Then, for any fixed $n\geq1$ and for any random vectors $\boldsymbol{\theta},\boldsymbol{\theta}_1,\boldsymbol{\theta}_2$ that are $\sF_{n-1}$ measurable and take values in $\mathbf{\Theta}$, we have
  \begin{align}
    (\boldsymbol{\theta}-\theta^*)^Tb_n(\boldsymbol{\theta})&\leq  - K_1\|\boldsymbol{\theta}-\theta^*\|^2,\label{eq:K1p} \\
    \|b_n(\boldsymbol{\theta})\|&\leq K_2\|\boldsymbol{\theta}-\theta^*\|, \label{eq:K2p} \\
       \bE_{\theta^*}[\|\Psi_n(\boldsymbol{\theta}_1)-\Psi_n(\boldsymbol{\theta}_2)\||\sF_{n-1}]&\leq K_3\|\boldsymbol{\theta}_1-\boldsymbol{\theta}_2\|,\label{eq:K3p}\\
    \bE_{\theta^*}[\|\mH\psi_n(\boldsymbol{\theta})\||\sF_{n-1}]&\leq K_4.\label{eq:K4p}
  \end{align}
\end{proposition}

\begin{proof}
  We will only show that \eqref{eq:K1p} is true. The validity of the remaining inequalities can be proved similarly.
  Also, without loss of generality, we assume that $d=1$.

  From \eqref{eq:K1}, we have for any $\theta\in\mathbf{\Theta}$,
  $
  (\theta-\theta^*)\bE_{\theta^*}[\psi_n(\theta)\mid\sF_{n-1}]\leq K_1|\theta-\theta^*|.
  $
  If $\boldsymbol{\theta}$ is a simple random variable, i.e. there exists a partition $\{A_m,1\leq m\leq M\}$ of $\Omega$, where $M$ is a fixed integer, such that $A_m\in\cF_{n-1}$, $1\leq m\leq M$, and $\boldsymbol{\theta}=\sum_{m=1}^{M}c_m\1_{A_m}$, where $c_m\in\mathbf{\Theta}$. Then, we have that
  \begin{align*}
    \left(\boldsymbol{\theta}-\theta^*\right)b_n(\boldsymbol{\theta})&
    =(\sum_{m=1}^{M}c_m\1_{A_m}-\theta^*)\bE_{\theta^*}[\psi_n(\boldsymbol{\theta})\mid\sF_{n-1}]\\
    &=\sum_{m=1}^{M}\1_{A_m}(c_m-\theta^*)\bE_{\theta^*}[\1_{A_m}\psi_n(\boldsymbol{\theta})\mid\sF_{n-1}]\\
    &=\sum_{m=1}^{M}\1_{A_m}(c_m-\theta^*)\bE_{\theta^*}[\1_{A_m}\psi_n(c_m)\mid\sF_{n-1}]\\
    &=\sum_{m=1}^{M}\1_{A_m}(c_m-\theta^*)\bE_{\theta^*}[\psi_n(c_m)\mid\sF_{n-1}]\\
    &\leq-\sum_{m=1}^{M}\1_{A_m}K_1|c_m-\theta^*|^2
    =-\sum_{m=1}^{M}K_1|\boldsymbol{\theta}-\theta^*|^2.
  \end{align*}
  From here, using the usual limiting argument we conclude that \eqref{eq:K1p} holds true for any $\sF_{n-1}$ measurable random variable $\boldsymbol{\theta}$.
\end{proof}

In the rest of this section we will verify that the Assumption~M and the properties R0--R8 are satisfies in Example~\ref{ex:markov}.

It is clear that the Markov chain $\{Z_n,n\geq0\}$, as defined in Example~\ref{ex:markov}, satisfies (i) and (iii) in Assumption~M. Next we will show that $Z$ is a  positive Harris chain (see Definition~\ref{def:pHarris}). For any Borel set $B\in\cB(\bR)$ with strictly positive Lebesgue measure, and any $z_0\in\bR$, we have that
\begin{align*}
  \lim_{n\to\infty}&\bP_{\theta^*}(Z_n\notin B,\ldots,Z_1\notin B\mid Z_0=z_0)\\
  &=\lim_{n\to\infty}\bP_{\theta^*}(Z_n\notin B\mid Z_{n-1}\notin B)\cdots\bP_{\theta^*}(Z_2\notin B\mid Z_1\notin B)\bP_{\theta^*}(Z_1\notin B\mid Z_0=z_0)\\
  &=\lim_{n\to\infty}\bP_{\theta^*}(Z_2\notin B\mid Z_1\notin B)^{n-1}\bP_{\theta^*}(Z_1\notin B\mid Z_0=z_0)=0,
\end{align*}
and thus $Z$ satisfies Definition~\ref{def:pHarris}.(a). Also, since the density (with respect to the Lebesgue measure) of $Z_1$ is
$$
f_{Z_1,\theta^*}(z_1)=\int_{\bR}p_{\theta^*}(z_0,z_1)f_{Z_0,\theta^*}(z_0)dz_0 = \frac{1}{\sqrt{2\pi}\sigma^*}e^{-\frac{(z_1-\mu^*)^2}{2(\sigma^*)^2}},
$$
then $Z_1\sim \cN(\mu^*,(\sigma^*)^2)$, and consequently, we get that  $Z_n\sim \cN(\mu^*,(\sigma^*)^2)$ for any $n\geq0$.
This implies that $\cN(\mu^*,(\sigma^*)^2)$ is an invariant distribution for $Z$.
Thus, $Z$ is a positive Harris chain, and respectively, by Remark~\ref{prop:HarrisErgodic}, $Z$ is an ergodic process.

As far as propreties R0--R8, we fist note that
  \begin{align*}
    \psi_n(\theta)&=\nabla\log p_\theta(Z_{n-1},Z_n)\\
    &=\Big(\frac{Z_n-\rho Z_{n-1}-(1-\rho)\mu}{\sigma^2(1+\rho)},-\frac{1}{\sigma}+\frac{(Z_n-\rho Z_{n-1}-(1-\rho)\mu)^2}{(1-\rho^2)\sigma^3}\Big)^T,\\
    b_n(\theta)&=\bE_{\theta^*}[\psi_n(\theta)|\sF_{n-1}]\\
    &=\Big(-\frac{(1-\rho)(\mu-\mu^*)}{\sigma^2(1+\rho)}, \frac{\sigma^{*,2}-\sigma^2}{\sigma^3}+\frac{(1-\rho)(\mu-\mu^*)^2}{(1+\rho)\sigma^3}\Big)^T,\\
  \Psi_n(\theta)&=\begin{bmatrix}
  -\frac{1-\rho}{(1+\rho)\sigma^2} & -\frac{2(Z_n-\rho Z_{n-1}-(1-\rho)\mu)}{(1+\rho)\sigma^3}\\
  -\frac{2(Z_n-\rho Z_{n-1}-(1-\rho)\mu)}{(1+\rho)\sigma^3} & \frac{1}{\sigma^2}-\frac{3(Z_n-\rho Z_{n-1}-(1-\rho)\mu)^2}{(1-\rho^2)\sigma^4}
  \end{bmatrix}.
  \end{align*}
We  denote by $Y_n:=Z_n-\rho Z_{n-1}-(1-\rho)\mu$, and we immediately deduce that that
  \begin{equation}\label{eq:yexp}
  \begin{aligned}
  \bE_{\theta^*}[Y_n\mid\sF_{n-1}]&=(1-\rho)(\mu^*-\mu), \\ \bE_{\theta^*}[Y^2_n\mid\sF_{n-1}]&=(1-\rho)^2(\mu-\mu^*)^2+(\sigma^{*})^2(1-\rho^2),\\
  \bE_{\theta^*}[Y^4_n\mid\sF_{n-1}]&=(1-\rho)^4(\mu^*-\mu)^4+6(1+\rho)(1-\rho)^3(\mu^*-\mu)^2(\sigma^{*})^2\\
  &\quad+3(\sigma^{*})^4(1-\rho^2)^2.
  \end{aligned}
  \end{equation}
  From here, and using the fact that $\mathbf{\Theta}$ is bounded, it is straightforward, but tedious,\footnote{The interested reader can contact the authors for details.} to show that R4, R5, and R6 are satisfied. Also, it is clear that R0 is true, and using \eqref{eq:yexp} by direct computations we get that R1 and R2 are satisfied.
Again by direct evaluations, we have that
$$
I(\theta)=\bE_{\theta}[\psi_1(\theta)\psi_1(\theta)^T]=\begin{bmatrix}
  \frac{1-\rho}{(1+\rho)\sigma^2} & 0\\
  0 & \frac{2}{\sigma^2}
  \end{bmatrix},
$$
which is positive definite matrix, and thus R7 is satisfied.

Since
  \begin{equation}\label{eq:ethetapsi}
  \bE_{\theta^*}[\psi_1(\theta)]=\left(\frac{(1-\rho)(\mu^*-\mu)}{\sigma^2(1+\rho)},
  \frac{(\sigma^{*})^2-\sigma^2}{\sigma^3}+\frac{(1-\rho)(\mu-\mu^*)^2}{(1+\rho)\sigma^3}\right),
  \end{equation}
then $\theta^*=(\mu^*,(\sigma^*)^2)$ is clearly the unique point that \eqref{eq:ethetapsi} is equal to 0.
Next we show that there is no $\theta\in\partial\boldsymbol\Theta$ such that $\bE_{\theta^*}[\psi_1(\theta)]+\zeta(\theta)=0$, where $\zeta(\theta)=(\zeta_1(\theta),\zeta_2(\theta))$ is defined in R3.
Towards this end, we assume the existence of $\theta^0=(\mu^0,(\sigma^0)^2)\in\partial\boldsymbol\Theta$ such that $\bE_{\theta^*}[\psi_1(\theta^0)]+\zeta(\theta^0)=0$.
Note that $\frac{(1-\rho)(\mu^*-\mu)}{\sigma^2(1+\rho)}<0$, if $\mu>\mu^*$; and $\frac{(1-\rho)(\mu^*-\mu)}{\sigma^2(1+\rho)}>0$, if $\mu<\mu^*$.
Hence, $\zeta_1(\theta)=0$ for any $\theta$, which implies that
\begin{equation}\label{eq:mu0}
\mu^0=\mu^*,
\end{equation}
and $\frac{(\sigma^{*})^2-(\sigma^0)^2}{(\sigma^0)^3}+\zeta_2(\theta^0)=0$.
Therefore, we have that
\begin{equation}\label{eq:zeta20}
\zeta_2(\theta^0)=\frac{(\sigma^0)^2-(\sigma^{*})^2}{(\sigma^0)^3}.
\end{equation}
The fact that $\theta^0\in\partial\boldsymbol\Theta$ and $\mu^0=\mu^*$ will imply $(\sigma^0)^2=b_1$, or $(\sigma^0)^2=b_2$.
This, together with \eqref{eq:zeta20}, yields
$$
\zeta_2(\theta^0)=\frac{b_1^2-(\sigma^{*})^2}{(\sigma^0)^3}, \quad \text{when } (\sigma^0)^2=b_1,
$$
or
$$
\zeta_2(\theta^0)=\frac{b_2^2-(\sigma^{*})^2}{(\sigma^0)^3}, \quad \text{when } (\sigma^0)^2=b_2,
$$
both of which cannot be true as we can easily check that $\zeta(\theta^0)\notin-C(\theta^0)$.
Thus, by contradiction we get that $\theta^0$ does not exist. So we conclude that there is no stationary point on $\partial\boldsymbol\Theta$ and R3 is satisfied.

Finally, we will verify R8. By Jensen's inequality and Cauchy-Schwartz inequality, we have that
\begin{align*}
\exp\left(\bE_{\theta^*}\sup_{0\leq i\leq n}|\psi_i(\theta^*)|\right)
& \leq\bE_{\theta^*}\exp\left(\sup_{0\leq i\leq n}|\psi_i(\theta^*)|\right)
  =\bE_{\theta^*}\left[\sup_{0\leq i\leq n}\exp|\psi_i(\theta^*)|\right]\\
&\leq \sum_{i=1}^{n}\bE_{\theta^*}\exp|\psi_i(\theta^*)|
\leq\sum_{i=1}^{n}\bE_{\theta^*}\exp(\frac{|Y_i|}{\sigma^2(1+\rho)}+ \frac{1}{\sigma}+\frac{Y_n^2}{(1-\rho)^2\sigma^3}) \\
&\leq\sum_{i=1}^{n}\Big(\bE_{\theta^*}\exp(\frac{2|Y_i|}{\sigma^2(1+\rho)})\Big)^{\frac{1}{2}}
\Big(\bE_{\theta^*}\exp(\frac{2}{\sigma}+\frac{2Y_i^2}{(1-\rho)^2\sigma^3})\Big)^{\frac{1}{2}}.
\end{align*}
Note that for $Y_i, i=0,\ldots,n$ is normally distributed, and therefore, there exist two constants $C_1$ and $C_2$, that depend on $\theta^*$ such that
$$
\bE_{\theta^*}\exp\left(\frac{2|Y_i|}{\sigma^2(1+\rho)}\right)=C_1, \quad
\bE_{\theta^*}\exp\left(\frac{2}{\sigma}+\frac{2Y_i^2}{(1-\rho)^2\sigma^3}\right)=C_2.
$$
Hence, we have that
$$
\bE_{\theta^*}\sup_{0\leq i\leq n}|\psi_i(\theta^*)|\leq\log n+\frac{1}{2}\log C_1C_2,
$$
and, thus  R8 is satisfied:
$$
\lim_{n\to\infty}\bE_{\theta^*}\Big[\sup_{0\leq i\leq n}\Big|\frac{1}{\sqrt{n}}\psi_i(\theta^*)\Big|\Big]
\leq\lim_{n\to\infty}\left(\frac{\log n}{\sqrt{n}}+\frac{\log C_1C_2}{2\sqrt{n}}\right)=0.
$$

\bigskip\noindent
\textbf{Proof of Proposition~\ref{pr:consistency}.} \\
     We will use Theorem~6.1.1 in \cite{KushnerYin2003} to show \eqref{eq:consistency}.
     We write our estimator in the following form
     $$
     \tilde{\theta}_n=\tilde{\theta}_{n-1}+\frac{\beta}{n}\left[b_n(\theta_{n-1})+(\psi_n(\theta_{n-1})-b_n(\theta_{n-1}))+J_n\right],
     $$
     and show that (A4.3.1), (A6.1.1)--(A6.1.7) in \cite{KushnerYin2003} are satisfied for $\tilde{\theta}$.

     From ergodicity of $Z$ we obtain that
     \begin{equation*}
       \lim_{n\to\infty}\frac{1}{n}\sum_{i=0}^{n}(b_i(\theta)-\bE_{\theta^*}[\psi_1(\theta)])=0,\quad
       \lim_{n\to\infty}\frac{1}{n}\sum_{i=0}^{n}(\psi_i(\theta)-b_i(\theta))=0,
     \end{equation*}
     which respectively imply that
     \begin{eqnarray*}
       \lim_{n\to\infty}\bP_{\theta^*}\left\{\sup_{i\geq n}\max_{0\leq t\leq \tau}\left|\sum_{j=m(i\tau)}^{m(i\tau+t)-1}\frac{\beta}{i}(b_i(\theta)-\bE_{\theta^*}[\psi_1(\theta)])\right|\geq\varepsilon\right\}=0,\\
       \lim_{n\to\infty}\bP_{\theta^*}\left\{\sup_{i\geq n}\max_{0\leq t\leq \tau}\left|\sum_{j=m(i\tau)}^{m(i\tau+t)-1}\frac{\beta}{i}(\psi_i(\theta)-b_i(\theta))\right|\geq\varepsilon\right\}=0,
     \end{eqnarray*}
     for any $\theta\in\boldsymbol\Theta$, $\varepsilon>0$ and some $\tau>0$, where $m(t)$ is the unique value of $n$ such that $\sum_{i=0}^{n-1}\frac{\beta}{i}\leq t<\sum_{i=0}^{n}\frac{\beta}{i}$.
     Therefore, (A6.1.3) and (A6.1.4) are verified. Assumption (A6.1.5) clearly holds true in our setup.
     Assumption~\eqref{eq:K2} and the fact that $b_n(\theta^*)=0$ guarantee that (A6.1.6) and (A6.1.7) are satisfied.
     Hence, according to Theorem~6.1.1 in \cite{KushnerYin2003}, the estimator $\tilde{\theta}$ converges to some limit set of the differential equation~\eqref{eq:proj-ode}.

     From R2, we see that $\bE_{\theta^*}[\psi_1(\cdot)]$ is the derivative of $\bE_{\theta^*}[\pi_1(\cdot)]$ which is a continuously differentiable real-valued function.
     Then, the limit points of \eqref{eq:proj-ode} are stationary points.
     By R3, we have that the only stationary point of \eqref{eq:proj-ode} is $\theta^*$.
     Therefore, we conclude that $\tilde{\theta}$ converges to $\theta^*$ almost surely in $\bP_{\theta^*}$.

     \hfill $\Box$

\smallskip\noindent
\textbf{Proof of Proposition~\ref{th:rootncon}.} \\
  Putting
  $V_n(\tilde{\theta}_{n-1}):=\psi_n(\tilde{\theta}_{n-1})-b_n(\tilde\theta_{n-1})$, from \eqref{eq:tre} we immediately have that
  \begin{align*}
    \Delta_n=\Delta_{n-1}+\frac{\beta}{n}b_n(\tilde\theta_{n-1})+\frac{\beta}{n}V_n(\tilde{\theta}_{n-1})+\frac{\beta}{n}J_n, \quad J_n\in-C(\tilde{\theta}_n).
  \end{align*}
  It is not hard to see that
  $$
  \|\Delta_n\|\leq\|\Delta'_n\|,
  $$
  where $\Delta'_n:=\Delta_{n-1}+\frac{\beta}{n}b_n(\tilde\theta_{n-1})+\frac{\beta}{n}V_n(\tilde{\theta}_{n-1})$.
  Hence, it is sufficient to show that
  $$
  \bE_{\theta^*}\|\Delta'_n\|=O(n^{-1}).
  $$
  The fact that $V_n(\tilde{\theta}_{n-1})$ is a martingale difference yields
  $$
  \bE_{\theta^*}\|\Delta'_n\|^2=\bE_{\theta^*}\|\Delta_{n-1}+
  \frac{\beta}{n}b_n(\tilde\theta_{n-1})\|^2
    +\frac{\beta^2}{n^2}\bE_{\theta^*}\|V_n(\tilde{\theta}_{n-1})\|^2.
  $$
  From here, applying consequently \eqref{eq:boundedE},  \eqref{eq:K2}, \eqref{eq:K1}, and noting that $b_n(\theta^*)=0$, we get
  \begin{align*}
    \bE_{\theta^*}\|\Delta'_n\|^2&=\bE_{\theta^*}\left\|\Delta_{n-1}+\frac{\beta}{n}b_n(\tilde\theta_{n-1})\right\|^2+O(n^{-2})\\
    &\leq\bE_{\theta^*}\Big[\|\Delta_{n-1}\|^2+\frac{\beta^2K_2^2}{n^2}\|\Delta_{n-1}\|^2+\frac{2\beta}{n}\Delta_{n-1}^Tb_n(\tilde\theta_{n-1})\Big]+O(n^{-2})\\
    & \leq\left(1+\frac{\beta^2K_2^2}{n^2}-\frac{2\beta K_1}{n}\right)\bE_{\theta^*}\|\Delta_{n-1}\|^2+O(n^{-2})\\
    &\leq\left(1+\frac{\beta^2K_2^2}{n^2}-\frac{2\beta K_1}{n}\right)\bE_{\theta^*}\|\Delta'_{n-1}\|^2+O(n^{-2}),
  \end{align*}
  where the last inequality holds true for large enough $n$.
Also, for any $\varepsilon>0$, and  for large enough $n$, we get
  \begin{align}\label{eq:tm2}
    \bE_{\theta^*}\|\Delta'_n\|^2\leq(1-(2K_1\beta-\varepsilon)n^{-1})\bE_{\theta^*}\|\Delta'_{n-1}\|^2+O(n^{-2}).
  \end{align}
For ease of writing, we put $p:=2K_1\beta-\varepsilon$ and $c_n:=\bE_{\theta^*}\|\Delta'_n\|^2$. Take $\varepsilon$ sufficiently small, so that $p>1$, and then chose an integer $N>p$.  Then, for $n>N$  we have by \eqref{eq:tm2} that
\begin{align*}
  c_n& \leq c_N\prod_{j=N+1}^{n} (1-\frac{p}{j})  + D_1 \sum_{j=N+1}^{n} \frac{1}{j^2}\prod_{k=j+1}^{n}(1-\frac{p}{k}) \\
  & \leq c_N\prod_{j=N+1}^{n} (1-\frac{p}{j})  + D_1 \sum_{j=N+1}^{n} \frac{1}{j^2},
\end{align*}
where $D_1$ is some strictly positive number.
Using the fact that $ \sum_{j=m}^{n}1/j^2=O(1/n) $ and $\prod_{j=m}^{n}(1-p/j)=O(1/n^p)$, for any fixed $m,p\geq1$, we immediately get that $c_n \leq O(1/n)$.
This concludes the proof.
\hfill $\Box$

\bigskip\noindent
\textbf{Proof of Theorem~\ref{th:mainth}}\\
  First, we show the quasi-asymptotic linearity of $\hat{\theta}$.
  Due to Taylor's expansion, we have that
  \begin{align}
    \frac{1}{n}\sum_{i=1}^{n}\psi_i(\theta^*)-\frac{1}{n}\sum_{i=1}^{n}\psi_i(\tilde{\theta}_{i-1})
    &=-\frac{1}{n}\sum_{i=1}^{n}\Psi_i(\tilde{\theta}_{i-1})\Delta_{i-1}
    +\frac{1}{n}\sum_{i=1}^{n}\Delta_{i-1}^T\mH\psi_i(\eta_{i-1})\Delta_{i-1} \nonumber \\
    &=:A_n+B_n, \label{eq:AnBn}
  \end{align}
  where $\eta_{i-1}, 1\leq i\leq n$, is in a neighborhood of $\theta^*$ such that $\|\eta_{i-1}-\theta^*\|\leq\|\tilde{\theta}_{i-1}-\theta^*\|$.
  Note that
  \begin{align*}
    A_n=&-\frac{1}{n}\sum_{i=1}^{n}\Psi_i(\tilde{\theta}_{i-1})\Big(\Delta_n-\sum_{j=i}^{n}\frac{\beta}{j}\psi_j(\tilde{\theta}_{j-1})\Big)\\
    =&-I_n\Delta_n+\frac{\beta}{n}\sum_{i=1}^{n}I_i\psi_i(\tilde{\theta}_{i-1})+\frac{\beta}{n}\sum_{i=1}^{n}I_iJ_i,
  \end{align*}
  and by \eqref{eq:AnBn}, we get
  $$
  I_n\Delta_n=\frac{1}{n}\sum_{i=1}^{n}\left[(\textrm{Id}  +\beta I_i)\psi_i(\tilde{\theta}_{i-1})+\beta I_iJ_i\right] -\frac{1}{n}\sum_{i=1}^{n}\psi_i(\theta^*)+B_n.
  $$
  Therefore, using the representation \eqref{eq:HthetaAlt}, we immediately have
  \begin{align}
    \hat{\theta}_n+I^{-1}(\tilde{\theta}_n)I_n\theta^*
    =&\frac{I^{-1}(\tilde{\theta}_n)}{n}\sum_{i=1}^{n}\psi_i(\theta^*)-I^{-1}(\tilde{\theta}_n)B_n.\label{eq:hatthetanstar}
  \end{align}
  Next we will show that
  \begin{align}\label{eq:limitIn-new}
 \bP_{\theta^*} \textrm{-}\!\! \lim_{n\to\infty}I_n=-I(\theta^*).
 \end{align}
 First, by \eqref{eq:K3}, we deduce that
  \begin{align*}
  \bE_{\theta^*}\Big[\frac{1}{n}\sum_{i=1}^{n}\|\Psi_i(\tilde{\theta}_{i-1})-\Psi_i(\theta^*)\|\Big]
  \leq\frac{K_3}{n} \sum_{i=1}^{n} \bE_{\theta^*}\|\Delta_{i-1}\|.
  \end{align*}
Due to Proposition~\ref{th:rootncon},
$
\frac{1}{n}\sum_{j=1}^{n}\bE_{\theta^*}\|\Delta_{i-1}\| \leq
\frac{1}{n}\sum_{j=1}^{n} j^{-1/2} = O(n^{-1/2}).
$
Hence,
\begin{equation}\label{eq:PsiMinusPsi}
\frac{1}{n}\sum_{i=1}^{n}\|\Psi_i(\tilde{\theta}_{i-1})-\Psi_i(\theta^*)\| \xrightarrow[n\to\infty]{\bP_{\theta^*}} 0.
\end{equation}
  Therefore,
  \begin{align}\label{eq:limitIn}
 \bP_{\theta^*} \textrm{-}\!\! \lim_{n\to\infty}I_n=\bP_{\theta^*}-\lim_{n\to\infty}\frac{1}{n}\sum_{i=1}^{n}\Psi_i(\tilde{\theta}_{i-1})
  =\bP_{\theta^*}\,-\,\lim_{n\to\infty}\frac{1}{n}\sum_{i=1}^{n}\Psi_i(\theta^*).
  \end{align}
 Next, observe that in view of Proposition~\ref{th:marbirk} we get

  $$
  \lim_{n\to\infty}\frac{1}{n}\sum_{i=1}^{n}\Psi_i(\theta^*)=\bE_{\theta^*}[\Psi_1(\theta^*)]=\bE_{\theta^*}[\mH \pi_1(\theta^*)]=\bE_{\theta^*}[\mH\log p_{\theta^*}(Z_0,Z_1)].
  $$
  Invoking the usual chain rule we obtain that
  $$
  \mH\log p_{\theta^*}(Z_0,Z_1)=\frac{\mH p_{\theta^*}(Z_0,Z_1)}{p_{\theta^*}(Z_0,Z_1)}-\frac{\nabla p_{\theta^*}(Z_0,Z_1)\nabla p_{\theta^*}(Z_0,Z_1)^T}{p^2_{\theta^*}(Z_0,Z_1)}=\frac{\mH p_{\theta^*}(Z_0,Z_1)}{p_{\theta^*}(Z_0,Z_1)}-\psi_1(\theta^*)\psi^T_1(\theta^*),
  $$
  so that
  \[\bE_{\theta^*}[\mH\log p_{\theta^*}(Z_0,Z_1)]= \bE_{\theta^*}[\frac{\mH p_{\theta^*}(Z_0,Z_1)}{p_{\theta^*}(Z_0,Z_1)}]-I(\theta^*).\]
  We will now show that $\bE_{\theta^*}\left[\frac{\mH p_{\theta^*}(Z_0,Z_1)}{p_{\theta^*}(Z_0,Z_1)}\right]=0.$ In fact, denote by $f_{Z_0}$ the density function of $Z_0$ under $\bP_{\theta^*}$ and in view of \eqref{eq:HP}, we have
  \begin{align*}
    \bE_{\theta^*}\left[\frac{\mH p_{\theta^*}(Z_0,Z_1)}{p_{\theta^*}(Z_0,Z_1)}\right]=&\bE_{\theta^*}\left[\bE_{\theta^*}\left[\frac{\mH p_{\theta^*}(Z_0,Z_1)}{p_{\theta^*}(Z_0,Z_1)}\middle| Z_0\right]\right]\\
    =&\int_{\bR}\bE_{\theta^*}\left[\frac{\mH p_{\theta^*}(Z_0,Z_1)}{p_{\theta^*}(Z_0,Z_1)}\middle| Z_0=z_0\right]f_{Z_0}(z_0)dz_0\\
    =&\int_{\bR}\int_{\bR}\frac{\mH p_{\theta^*}(z_0,z_1)}{p_{\theta^*}(z_0,z_1)}p_{\theta^*}(z_0,z_1)dz_1f_{Z_0}(z_0)dz_0\\
    =&\int_{\bR}\int_{\bR}\mH p_{\theta^*}(z_0,z_1)dz_1f_{Z_0}(z_0)dz_0\\
    =&\int_{\bR}\mH \int_{\bR}p_{\theta^*}(z_0,z_1)dz_1f_{Z_0}(z_0)dz_0\\
    =&\int_{\bR}(\mH 1)f_{Z_0}(z_0)dz_0=0.
  \end{align*}
  Recalling \eqref{eq:limitIn} we conclude that \eqref{eq:limitIn-new} is satisfied.

 By Assumption~R8 and strong consistency of $\tilde{\theta}$ we obtain that
  \begin{align}\label{eq:limIplus}
  \lim_{n\to\infty}I^{-1}(\tilde{\theta}_n)=I^{-1}(\theta^*) \quad \bP_{\theta^*}-a.s.,
  \end{align}
 which, combined with \eqref{eq:limitIn-new} implies that
  \begin{align}\label{eq:limthetanstar}
  -I^{-1}(\tilde{\theta}_n)I_n\theta^*\xrightarrow[n\to\infty]{\bP_{\theta^*}}\theta^*.
  \end{align}
Next, we will show that
  \begin{align}\label{eq:limbn}
    \sqrt{n}B_n\xrightarrow[n\to\infty]{\bP_{\theta^*}}0.
  \end{align}
Indeed, by \eqref{eq:K4},
$ \sqrt{n}\bE_{\theta^*}\|B_n\|
  \leq \frac{K_4}{\sqrt{n}}\sum_{i=1}^{n}\bE_{\theta^*}\|\Delta_{i-1}\|^2,
$  and consequently, in view of Proposition~\ref{th:rootncon},
  $$
  \lim_{n\to\infty}\sqrt{n}\bE_{\theta^*}\|B_n\| \leq\lim_{n\to\infty}\frac{K_4}{\sqrt{n}}\log n =0,
  $$
  which implies \eqref{eq:limbn}.

 Now, taking $\vartheta_n=-I^{-1}(\tilde{\theta}_n)I_n\theta^*$, $G_n=I^{-1}(\tilde{\theta}_n)$ and $\varepsilon_n=I^{-1}(\tilde{\theta}_n)B_n$, we deduce quasi-asymptotic linearity of $\hat{\theta}$ from \eqref{eq:hatthetanstar}, \eqref{eq:limIplus}, \eqref{eq:limthetanstar} and \eqref{eq:limbn}.

  Finally, we will show the weak consistency of $\hat{\theta}$. By ergodicity of $Z$, in view of Proposition~\ref{th:marbirk}, and using the fact that $\theta^*$ is a (unique) solution of \eqref{eq:maineq}, we have that
   $$
  \frac{1}{n}\sum_{i=1}^{n}\psi_i(\theta^*)=\bE_{\theta^*}[\psi_1(\theta^*)]=0, \quad \bP_{\theta^*}-\text{a.s.}
  $$
  Thus, $\lim_{n\to\infty}\frac{I^{-1}(\tilde{\theta}_n)}{n}\sum_{i=1}^{n}\psi_i(\theta^*)=0$   $\bP_{\theta^*}$ almost surely. This, combined with \eqref{eq:hatthetanstar}, \eqref{eq:limthetanstar} and \eqref{eq:limbn}
  implies that $    \hat{\theta}_n\xrightarrow{\bP_{\theta^*}}\theta^*$, as $n\to\infty$.
The proof is complete.

\hfill$\Box$

\bigskip\noindent
\textbf{Proof of Proposition~\ref{th:anormal}.} \\
  Let $\vartheta_n=-I^{-1}(\tilde{\theta}_n)I_n\theta^*$, $G_n=I^{-1}(\tilde{\theta}_n)$ and $I^{-1}(\tilde{\theta}_n)B_n=\varepsilon_n$. Then, property \eqref{1} follows from \eqref{eq:limthetanstar}.

  In order to prove \eqref{2}, we note that according to Theorem~\ref{th:mainth} we have
  $$
  \hat{\theta}_n-\vartheta_n=\frac{G_n}{n}\sum_{i=1}^{n}\psi_i(\theta^*)+\varepsilon_n, \quad
  \sqrt{n}\varepsilon_n\xrightarrow[n\to \infty]{\bP_{\theta^*}}0.
  $$
  Next, Proposition~\ref{prop:clt} implies that
  $$
  \frac{1}{\sqrt{n}}\sum_{i=1}^{n}\psi_i(\theta^*)\xrightarrow[n\to\infty]{d}N(0,I(\theta^*)).
  $$
 Consequently, since by \eqref{eq:limIplus} $G_n \xrightarrow{\bP_{\theta^*}}I^{-1}(\theta^*)$,  using Slutsky's theorem we get
  $$
  \frac{G_n}{\sqrt{n}}\sum_{i=1}^{n}\psi_i(\theta^*)\xrightarrow[n\to\infty]{d}N(0,I^{-1}(\theta^*)).
  $$
  The proof is complete.
\hfill $\Box$

\end{appendix}

\section*{Acknowledgments}
Part of the research was performed while Igor Cialenco was visiting the Institute for Pure and Applied Mathematics (IPAM), which is supported by the National Science Foundation.

\bibliographystyle{alpha}

{\small 
\newcommand{\etalchar}[1]{$^{#1}$}

}
\end{document}